\newtheorem{theorem}{Theorem}[section]
\newtheorem{lemma}[theorem]{Lemma}
\begin{document}
\textwidth 150mm \textheight 225mm
\title{The minimum Q-index of strongly connected bipartite digraphs
with complete bipartite subdigraphs
\thanks{ Supported by the National Natural Science
Foundation of China (No. 11171273) and
the Seed Foundation of Innovation and Creation for Graduate Students in Northwestern Polytechnical University (No. Z2017190).}}
\author{{Weige Xi and Ligong Wang\footnote{Corresponding author.} }\\
{\small Department of Applied Mathematics, School of Science,}\\
{\small Northwestern Polytechnical University, Xi'an, Shaanxi 710072, P.R.China}
 \\{\small E-mail: xiyanxwg@163.com, lgwangmath@163.com }\\}
\date{}
\maketitle
\begin{center}
\begin{minipage}{120mm}
\vskip 0.3cm
\begin{center}
{\small {\bf Abstract}}
\end{center}
{\small Let $\mathcal{G}_{n,p,q}$ denote the set of strongly connected
bipartite digraphs on $n$ vertices
which contain a complete bipartite subdigraph $\overleftrightarrow{K_{p,q}}$, where $p, q, n$ are positive
 integers and $p+q \leq n$. In this paper, we study the Q-index (i.e. the signless Laplacian spectral radius)
 of $\mathcal{G}_{n,p,q}$, and determine the extremal digraph that has the minimum Q-index.

\vskip 0.1in \noindent {\bf Key Words}: \ Strongly connected
bipartite digraphs, Q-index, eigenvector. \vskip
0.1in \noindent {\bf AMS Subject Classification (2000)}: \ 05C50, 15A18.}
\end{minipage}
\end{center}

\section{Introduction}
\label{sec:ch6-introduction}

Let $G=(V(G),E(G))$ be a digraph with vertex set
$V(G)=\{v_1,v_2,\ldots,v_n\}$ and  arc set
$E(G)$. If there is an arc from $v_i$ to $v_j$, we indicate this by writing
$(v_i,v_j)$, call $v_j$ the head of $(v_i,v_j)$, and $v_i$ the tail
of $(v_i,v_j)$, respectively. Let $H=(V(H),E(H))$ be a subdigraph of $G$, if
$V(H)\subseteq V(G)$, $E(H)\subseteq E(G)$. For any vertex $v_i\in V(G)$, $N_i^{+}=N_{v_i}^{+}(G)=\{v_j : (v_i,v_j)\in E(G)\}$
is called the set of out-neighbors of $v_i$. Let $d_i^{+}=|N_i^{+}|$ denote the outdegree of the vertex $v_i$ in the
digraph $G$. A digraph is simple if it has
no loops and multiarcs. A digraph is strongly
connected if for every pair of vertices $v_i,v_j\in V(G)$, there
exists a directed path from $v_i$ to $v_j$. Let $\overrightarrow{P_{n}}$ and
$\overrightarrow{C_{n}}$ denote the directed path and the
directed cycle on $n$ vertices, respectively. Suppose
$\overrightarrow{P_k}=v_1v_2\dots v_k$, we call $v_1$ the
initial vertex of the directed path
$\overrightarrow{P_k}$, $v_k$ the terminal vertex of the
directed path $\overrightarrow{P_k}$, respectively.

Let $G=(V(G),E(G))$ be a digraph, If $V(G)=U\cup W$, $U\cap W=\emptyset$
and for any arc $(v_i,v_j)\in E(G)$, $v_i\in U$ and $v_j\in W$ or $v_i\in W$ and $v_j\in U$,
then the digraph $G=(V(G),E(G))$ is called a bipartite digraph. Let
$\overleftrightarrow{K_{p,q}}$ be a complete bipartite digraph
obtained from a complete bipartite graph $K_{p,q}$ by replacing each edge with a
pair of oppositely directed arcs. In this paper, we consider finite, simple strongly connected bipartite digraphs.

For a digraph $G$ of order $n$, let $A(G)=(a_{ij})$ denote the adjacency matrix of $G$,
where $a_{ij}=1$ if $(v_i,v_j)\in E(G)$ and $a_{ij}=0$ otherwise.
Let $D(G)=\textrm{diag}(d_1^{+},d_2^{+},$ $\ldots,d_n^{+})$
be the diagonal matrix with outdegrees of the vertices of $G$ and
$Q(G)=D(G)+A(G)$ the signless Laplacian matrix of $G$. The
spectral radius of $Q(G)$, i.e., the largest modulus of the eigenvalues of $Q(G)$,
is called the Q-index or the signless Laplacian spectral radius of $G$, denoted by $q(G)$.
It follows from Perron Frobenius Theorem that $q(G)$ is an eigenvalue of $Q(G)$, and
there is a positive unit eigenvector corresponding to $q(G)$ when $G$ is a
strongly connected digraph, we called the positive unit eigenvector corresponding to $q(G)$ the
Perron vector of digraph $G$.

The problem of determining graphs that maximize or minimize the spectral radius or the largest eigenvalue
 of the related matrix among a class of given graphs is an important classic problem in spectral graph theory.
The problem determining undirected graphs that maximize or minimize of adjacent spectral radius, Laplacian spectral radius
and signless Laplacian spectral radius have been well treated in the literature, see \cite{CT,LL,PS,WH} and so on, but
there is no much known about digraphs. In \cite{LSWY}, Lin et al. characterized the extremal digraphs with minimum
spectral radius among all digraphs with given clique number and grith, and the extremal
digraphs with maximum spectral radius among all digraphs with given vertex connectivity.
In \cite{HoYo}, Hong and You established some sharp upper or lower
bound on the signless Laplacian spectral radius of digraphs with some given
parameter such as clique number, girth or vertex connectivity,
and characterized the extremal digraph. In \cite{DL}, Drury and
Lin determined the digraphs that have the minimum and second minimum spectral radius
among all strongly connected digraphs with given order and dichromatic number.
In \cite{CCL}, Chen et.al studied the spectral radius of strongly
connected bipartite digraphs which contain a complete bipartite subdigraph and given
characterization of the extremal digraph with the least spectral radius.

In this paper, we study the Q-index (i.e. the signless Laplacian spectral radius) of strongly connected
bipartite digraphs which have the complete bipartite subdigraph
$\overleftrightarrow{K_{p,q}}$ \ $(p\geq q\geq1)$, and determine the extremal digraph that has the minimum Q-index.

\section{Preliminaries}

In this section, we present some known lemmas which are useful for the proof of the main results.
In the rest of this paper, let $X=(x_1,x_2,\ldots,x_n)^T$ be the
Perron vector corresponding to $q(G)$, where $x_i$ corresponds to the vertex $v_i$.

\noindent\begin{lemma}\label{le:c1} (\cite{HoYo})
Let $G=(V(G),E(G))$ be a simple digraph on $n$ vertices, $u, v, w$ distinct vertices of
$V(G)$ and $(u,v)\in E(G)$. Let $H=G-\{(u,v)\}+\{(u,w)\}$. (Noting that if
$(u,w)\in E(G)$, then $H$ has multiple arc
$(u,w)$.) If $x_w\geq x_v$, then $q(H)\geq q(G)$.
Furthermore, if $H$ is strongly connected and $x_w>x_v$, then $q(H)>
q(G)$.
\end{lemma}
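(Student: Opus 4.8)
The plan is to turn the arc-switch into an exact matrix identity and then compare spectral radii through the Perron vector of $G$. The key observation is that passing from $G$ to $H$ only alters the arcs leaving $u$: one out-arc (to $v$) is deleted and one out-arc (to $w$) is inserted, so the out-degree of $u$ is unchanged and hence $D(H)=D(G)$. Writing $e_i$ for the $i$-th standard basis vector of $\mathbb{R}^{n}$, this gives the exact identity
\[
Q(H)=Q(G)-e_ue_v^{T}+e_ue_w^{T},
\]
where in the degenerate case $(u,w)\in E(G)$ one reads the multiarc with multiplicity, so that the entry in position $(u,w)$ increases from $1$ to $2$ and the out-degree count is still preserved.

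Next I would deduce the non-strict inequality. Applying the identity to the Perron vector $X>0$ of $G$ gives $Q(H)X=Q(G)X+(x_w-x_v)e_u=q(G)X+(x_w-x_v)e_u$, so if $x_w\ge x_v$ then $Q(H)X\ge q(G)X$ componentwise. Since $Q(H)$ is a nonnegative matrix, I invoke the standard fact that the spectral radius of any nonnegative matrix $M$ satisfies $\rho(M)\ge\min_i (Mx)_i/x_i$ for every positive vector $x$ (applied with $M=Q(H)$, $x=X$; this follows by iterating $Q(H)^{k}X\ge q(G)^{k}X$ and letting $k\to\infty$), which yields $q(H)=\rho(Q(H))\ge q(G)$. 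Note that this step needs no connectivity hypothesis on $H$.

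For the strict part I would add the hypotheses that $H$ is strongly connected, so that $Q(H)$ is irreducible and admits a positive left Perron vector $Y$ with $Y^{T}Q(H)=q(H)Y^{T}$, and that $x_w>x_v$. Multiplying the identity on the left by $Y^{T}$ gives $q(H)\,Y^{T}X=Y^{T}Q(H)X=q(G)\,Y^{T}X+(x_w-x_v)y_u$, hence $(q(H)-q(G))\,Y^{T}X=(x_w-x_v)y_u>0$; since $Y^{T}X>0$ this forces $q(H)>q(G)$.

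I expect all of this to be short once the identity $Q(H)=Q(G)-e_ue_v^{T}+e_ue_w^{T}$ is in hand; the only real subtlety is that $Q(G)$ is not symmetric, so a Rayleigh-quotient comparison is unavailable and one must instead rely on the Perron--Frobenius / Collatz--Wielandt theory for nonnegative (respectively irreducible) matrices. In particular one must keep track of the strong connectivity of $H$, since that is exactly what guarantees the positive left eigenvector used in the strict case; without it only the non-strict conclusion is available.
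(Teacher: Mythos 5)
Your proof is correct. Note that the paper itself offers no argument for this statement: Lemma~\ref{le:c1} is imported verbatim from \cite{HoYo}, so there is no internal proof to compare against; what you have written is essentially the standard argument from that source. The rank-one identity $Q(H)=Q(G)-e_ue_v^{T}+e_ue_w^{T}$ (valid because the out-degree of $u$, and hence $D$, is unchanged, also in the multiarc case) combined with $Q(H)X\ge q(G)X$ and the Collatz--Wielandt bound for nonnegative matrices gives the weak inequality, and pairing with a positive left Perron vector of the irreducible $Q(H)$ gives the strict one; both steps are sound, and you correctly identify that the non-symmetry of $Q$ forces the Perron--Frobenius route rather than a Rayleigh-quotient comparison. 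The only caveat worth recording is that the hypothesis ``$X$ is the Perron vector of $G$'' presupposes $G$ strongly connected (the paper's standing convention), which is what guarantees $X>0$; for the weak inequality your iteration argument in fact only needs $X\ge 0$, $X\ne 0$, so nothing breaks.
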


\noindent\begin{lemma}\label{le:c2} (\cite{HoYo})
Let $G$ be a digraph and $G_1,G_2,\ldots,G_s$ be the strongly connected components of $G$.
Then $q(G)=\max\{q(G_1),q(G_2),\ldots,q(G_s)\}$.
\end{lemma}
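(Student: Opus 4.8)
The plan is to put $Q(G)$ into block triangular form by ordering the vertices along a topological order of the condensation of $G$, and then to read off its spectrum from the diagonal blocks; this is the standard block‑triangular argument.

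First I would form the condensation $\widehat G$, whose vertices are the strong components $G_1,\dots,G_s$ and which has an arc $G_i\to G_j$ ($i\neq j$) whenever $G$ contains an arc from a vertex of $G_i$ to a vertex of $G_j$. Since $\widehat G$ is acyclic it admits a topological ordering, so after relabelling I may assume that every arc of $G$ joining two distinct components runs from some $G_i$ to some $G_j$ with $i<j$. Now order the vertices of $G$ so that all vertices of $G_1$ precede all vertices of $G_2$, and so on. With respect to this ordering $A(G)$ is block upper triangular with diagonal blocks $A(G_1),\dots,A(G_s)$, since no arc runs from a later component back to an earlier one; and because $D(G)$ is diagonal, $Q(G)=D(G)+A(G)$ is block upper triangular as well, its $i$‑th diagonal block being $D_i+A(G_i)$, where $D_i$ is the corresponding diagonal block of $D(G)$. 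Identifying the block $D_i+A(G_i)$ with the signless Laplacian matrix $Q(G_i)$, the characteristic polynomial of $Q(G)$ factors as $\prod_{i=1}^{s}\det(xI-Q(G_i))$, so the spectrum of $Q(G)$ is the union of the spectra of $Q(G_1),\dots,Q(G_s)$.

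Taking largest moduli on both sides, and recalling that by definition $q(\cdot)$ is the spectral radius of the signless Laplacian matrix, then yields $q(G)=\max\{q(G_1),q(G_2),\dots,q(G_s)\}$. The one point needing genuine care — and the step I would check most carefully — is the identification of the $i$‑th diagonal block of $Q(G)$ with $Q(G_i)$: unlike the connected components of an undirected graph, two distinct strong components of a digraph may be joined by arcs, so one must keep careful track of how those inter‑component arcs enter the diagonal matrix $D(G)$; once that bookkeeping is settled, the rest is routine linear algebra on triangular matrices.
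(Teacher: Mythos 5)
The paper offers no proof of Lemma~\ref{le:c2}; it is quoted from \cite{HoYo}, so your argument has to stand on its own, and it does not yet. The block-triangularization via the condensation is the right first move: ordering the vertices along a topological order of the strong components does make $A(G)$, and hence $Q(G)=D(G)+A(G)$, block upper triangular with the strong components on the diagonal. The genuine gap is exactly the step you deferred as ``bookkeeping'': the $i$-th diagonal block is $D_i+A(G_i)$, where $D_i$ records the out-degrees of the vertices of $G_i$ computed \emph{in $G$}, not in $G_i$. Whenever some vertex of $G_i$ has out-arcs into other components, $D_i+A(G_i)=Q(G_i)+E_i$ with $E_i$ a nonzero nonnegative diagonal matrix, and Perron--Frobenius monotonicity only gives $\rho\bigl(D_i+A(G_i)\bigr)\ge q(G_i)$, possibly strictly. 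So the factorization of the characteristic polynomial yields $q(G)=\max_i\rho\bigl(D_i+A(G_i)\bigr)\ge\max_i q(G_i)$, not the claimed equality; the identification of the diagonal blocks with the $Q(G_i)$ cannot be ``settled'' in general.

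Indeed the identification genuinely fails: for $G$ consisting of the single arc $(u,v)$ one has $Q(G)=\begin{pmatrix}1&1\\0&0\end{pmatrix}$ with spectral radius $1$, while both strong components are isolated vertices with $q=0$. Hence your route cannot close as written; it needs an additional argument controlling the surplus out-degrees, e.g.\ the observation that the equality does hold when a component attaining the maximum has no arcs leaving it, so that its diagonal block is exactly its own signless Laplacian. That is precisely the situation in the paper's only application of the lemma (the proof of Theorem~\ref{th:ch-6}, where the component $B^1_{n-1,p,q}$ of $B^{5*}_{n,p,q}$ has no out-arcs to the remaining vertex $v_n$, whose block contributes only the value $1$). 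You should either add such a hypothesis/argument or check the exact formulation in \cite{HoYo}; as it stands, your proof establishes $q(G)=\max_i\rho(D_i+A(G_i))\ge\max_i q(G_i)$ and no more.
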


\noindent\begin{lemma}\label{le:c3} (\cite{HoYo})
Let $G$ be a digraph
and $H$ be a subdigraph of $G$. Then $q(H)\leq q(G)$. If $G$ is strongly
connected, and $H$ is a proper subdigraph of $G$, then $q(H)<q(G)$.
\end{lemma}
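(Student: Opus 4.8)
\textbf{Proof proposal for Lemma~\ref{le:c3}.}
The plan is to reduce the statement to two standard facts about nonnegative matrices: that the spectral radius is monotone with respect to entrywise domination, and monotone with respect to passage to principal submatrices, together with the strict versions of both when the larger matrix is irreducible. The one point that needs genuine care — and the step I expect to be the main obstacle — is the comparison of $Q(H)$ with $Q(G)$: since $Q(G)=D(G)+A(G)$ carries the out-degrees on its diagonal, deleting arcs or vertices from $G$ alters the diagonal as well as the off-diagonal part of the matrix, and one must check that all of these changes move entries in the same (downward) direction, unlike the situation for a fixed-diagonal matrix such as $A(G)$ alone.

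Concretely, let $P$ denote the principal submatrix of $Q(G)$ whose rows and columns are indexed by $V(H)$. I would first show $0\le Q(H)\le P$ entrywise. For $u\ne v$ in $V(H)$ this is immediate, because $Q(H)_{uv}$ equals $1$ or $0$ according as $(u,v)\in E(H)$ or not, while $P_{uv}=1$ or $0$ according as $(u,v)\in E(G)$ or not, and $E(H)\subseteq E(G)$. For the diagonal entry at $u\in V(H)$, one has $Q(H)_{uu}=d_u^{+}(H)$, the number of out-neighbours of $u$ joined to $u$ by an arc of $H$ (all of which lie in $V(H)$), and this is at most $d_u^{+}(G)=P_{uu}$. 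Hence $Q(H)$ is a nonnegative matrix dominated entrywise by the principal submatrix $P$ of $Q(G)$.

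Now Perron--Frobenius monotonicity gives $q(H)=\rho(Q(H))\le\rho(P)\le\rho(Q(G))=q(G)$, which is the first assertion. For the strict part, observe that if $G$ is strongly connected then $Q(G)$ is irreducible: the digraph associated with $Q(G)$ is obtained from $G$ by adjoining a loop at every vertex (each out-degree is at least $1$ in a strongly connected digraph), hence it is again strongly connected. Every proper subdigraph $H$ of $G$ falls into one of two cases. If $V(H)\subsetneq V(G)$, then $P$ is a proper principal submatrix of the irreducible matrix $Q(G)$, so $\rho(P)<\rho(Q(G))$ and therefore $q(H)\le\rho(P)<q(G)$. If $V(H)=V(G)$ but $E(H)\subsetneq E(G)$, then $P=Q(G)$ while $Q(H)\le Q(G)$ with $Q(H)_{uv}=0<1=Q(G)_{uv}$ for some arc $(u,v)\in E(G)\setminus E(H)$; the strict Perron--Frobenius monotonicity for the irreducible matrix $Q(G)$ then yields $q(H)=\rho(Q(H))<\rho(Q(G))=q(G)$. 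Combining the two cases completes the proof.
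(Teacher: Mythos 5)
Your argument is correct: the entrywise comparison $0\le Q(H)\le P$ with the principal submatrix $P$ of $Q(G)$, the observation that deleting arcs lowers the diagonal as well as the off-diagonal entries, and the two strict cases (proper principal submatrix of an irreducible matrix, resp.\ $Q(H)\le Q(G)$ with $Q(H)\ne Q(G)$ and $Q(G)$ irreducible) together give exactly the stated lemma. Note that the paper itself offers no proof here — the lemma is quoted from \cite{HoYo} — and your Perron--Frobenius monotonicity argument is the standard route taken in that reference, so there is nothing to amend beyond the trivial remark that irreducibility of $Q(G)$ already follows from its off-diagonal pattern coinciding with that of $A(G)$.
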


\noindent\begin{lemma}\label{le:c4} (\cite{HoYo})
Let $G \ (\neq \overrightarrow{C_{n}})$ be a strongly connected digraph with
vertex set $V(G)=\{v_1,v_2,\ldots,v_n\}$, $\overrightarrow{P_k}=v_1v_2\dots v_k \ (k\geq3)$
be a directed path of $G$ with $d_i^{+}=1 \ (i=2,3,\ldots,k-1)$. Then we have $x_2<x_3<\ldots<x_{k-1}<x_k$.
\end{lemma}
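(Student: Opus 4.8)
The plan is to read off the eigen-equation $Q(G)X=q(G)X$ coordinatewise at the internal vertices of the path, and then to combine this with the elementary lower bound $q(G)>2$, which holds for every strongly connected digraph other than $\overrightarrow{C_n}$.

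First I would observe that for each $i\in\{2,3,\ldots,k-1\}$ the vertex $v_i$ has $d_i^{+}=1$, and its unique out-neighbour is $v_{i+1}$ since the arc $(v_i,v_{i+1})$ lies on $\overrightarrow{P_k}$. Reading the $i$-th coordinate of $Q(G)X=q(G)X$ thus gives
$$q(G)x_i = d_i^{+}x_i + \sum_{v_j\in N_i^{+}}x_j = x_i + x_{i+1},$$
so that $x_{i+1}=(q(G)-1)\,x_i$ for $i=2,3,\ldots,k-1$. Since $X$ is the Perron vector we have $x_i>0$ for all $i$, hence the desired chain $x_2<x_3<\cdots<x_{k-1}<x_k$ follows immediately once we know $q(G)-1>1$.

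It therefore remains to prove $q(G)>2$. For this I would sum the $n$ coordinate equations of $Q(G)X=q(G)X$ and interchange the order of summation in the adjacency term:
$$q(G)\sum_{i=1}^{n}x_i = \sum_{i=1}^{n}d_i^{+}x_i + \sum_{i=1}^{n}\sum_{v_j\in N_i^{+}}x_j = \sum_{i=1}^{n}\bigl(d_i^{+}+d_i^{-}\bigr)x_i,$$
where $d_i^{-}$ denotes the indegree of $v_i$. Because $G$ is strongly connected, $d_i^{+}\geq 1$ and $d_i^{-}\geq 1$, so $d_i^{+}+d_i^{-}\geq 2$ for every $i$; moreover a strongly connected digraph in which every vertex has outdegree $1$ is a single directed cycle, so $G\neq\overrightarrow{C_n}$ forces some vertex $v_\ell$ to have $d_\ell^{+}\geq 2$, whence $d_\ell^{+}+d_\ell^{-}\geq 3$. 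Using $x_i>0$,
$$q(G)\sum_{i=1}^{n}x_i = \sum_{i=1}^{n}\bigl(d_i^{+}+d_i^{-}\bigr)x_i \;\geq\; 2\sum_{i=1}^{n}x_i + x_\ell \;>\; 2\sum_{i=1}^{n}x_i,$$
and so $q(G)>2$. Substituting this back into the relations $x_{i+1}=(q(G)-1)x_i$ finishes the argument.

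The only step that is not pure bookkeeping is the passage to $q(G)>2$, and inside it the structural remark that a strongly connected digraph with all outdegrees equal to $1$ must be $\overrightarrow{C_n}$; this I would justify by following the unique out-arc from an arbitrary vertex to produce a directed cycle and then using strong connectivity to see that this cycle exhausts $V(G)$. (Equivalently, one could deduce $q(G)>2$ from $\sum_i d_i^{+}=\sum_i d_i^{-}=|E(G)|\geq n+1$, which rests on the same fact.) Everything else — the coordinatewise reading of the eigen-equation and the telescoping inequality — is routine.
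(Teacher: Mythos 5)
Your proposal is correct. Note, however, that the paper itself offers no proof of this lemma --- it is quoted from \cite{HoYo} --- so there is no in-paper argument to match; what you have done is supply a self-contained proof. Your two steps are sound: reading the eigen-equation at the internal path vertices gives $x_{i+1}=(q(G)-1)x_i$ for $i=2,\ldots,k-1$ (the unique out-neighbour of $v_i$ is indeed $v_{i+1}$), and your degree-sum identity $q(G)\sum_i x_i=\sum_i(d_i^{+}+d_i^{-})x_i$, combined with the observation that a strongly connected digraph with all outdegrees $1$ must be a directed cycle, correctly yields $q(G)>2$ and hence the strict chain. The main difference from the route most natural in this paper's own toolkit (and essentially the one used in \cite{HoYo}) is how $q(G)>2$ is obtained: since $G$ is strongly connected it contains a directed cycle $\overrightarrow{C_g}$, and $G\neq\overrightarrow{C_n}$ forces this cycle to be a proper subdigraph, so Lemma \ref{le:c3} gives $q(G)>q(\overrightarrow{C_g})=2$ in one line. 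Your averaging argument is more elementary in that it avoids the strict-monotonicity part of Lemma \ref{le:c3} (which itself rests on Perron--Frobenius theory), at the cost of proving the structural fact about outdegree-$1$ strongly connected digraphs; either way the conclusion stands. One small caveat: your parenthetical remark that $q(G)>2$ follows from $|E(G)|\geq n+1$ alone is slightly loose, because the sum is weighted by the $x_i$; it works only after locating a specific vertex with $d_\ell^{+}\geq 2$, which is exactly what your main argument does, so nothing is lost.
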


Let $G=(V(G),E(G))$ be a digraph with $(u,v)\in E(G)$ and $w\notin V(G)$, $G^w=(V(G^w),E(G^w))$
with $V(G^w)=V(G)\cup\{w\}, \ E(G^w)=E(G)-\{(u,v)\}+\{(u,w),(w,v)\}$.

\noindent\begin{lemma}\label{le:c5} (\cite{HoYo})
Let $G \ (\neq \overrightarrow{C_{n}})$ be a strongly connected digraph,
$w\notin V(G)$, and $G^w$ defined as before. Then $q(G)\geq q(G^w)$.
\end{lemma}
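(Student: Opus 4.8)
\medskip
\noindent\textit{Sketch of the intended proof.}
The plan is to recover $G$ from $G^w$ by a single arc-shift that ``undoes'' the subdivision, and then to invoke Lemmas~\ref{le:c1}, \ref{le:c2} and \ref{le:c4}. First note that $G^w$ is again strongly connected: any directed path of $G$ lifts to one of $G^w$ (replacing the arc $(u,v)$, wherever it is used, by $(u,w)$ then $(w,v)$), while $w$ is reachable from $u$ and reaches $v$. Moreover $G^w\neq\overrightarrow{C_{n+1}}$, since deleting $w$ and restoring the arc $(u,v)$ in a directed $(n+1)$-cycle would produce $\overrightarrow{C_n}$, contrary to hypothesis. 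Let $X$ denote the Perron vector of $G^w$, with $x_u,x_v,x_w$ its entries at $u,v,w$.

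The crucial inequality is $x_w<x_v$. In $G^w$ the three distinct vertices $u,w,v$ form a directed path $\overrightarrow{P_3}$ (playing the role of $v_1v_2v_3$) whose only internal vertex $w$ has outdegree $1$; since $G^w$ is strongly connected and $G^w\neq\overrightarrow{C_{n+1}}$, Lemma~\ref{le:c4} with $k=3$ gives $x_w<x_v$. Now put $H:=G^w-\{(u,w)\}+\{(u,v)\}$. Applying Lemma~\ref{le:c1} inside $G^w$ to the arc $(u,w)$ with new head $v$, the required hypothesis $x_v\geq x_w$ holds, so $q(H)\geq q(G^w)$. On the other hand $V(H)=V(G)\cup\{w\}$ and $E(H)=E(G)\cup\{(w,v)\}$, so $H$ is exactly $G$ with one extra vertex $w$ whose sole incident arc is the out-arc $(w,v)$; thus $w$ is a source, and the strongly connected components of $H$ are $G$ itself and the trivial component $\{w\}$. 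By Lemma~\ref{le:c2}, $q(H)=\max\{q(G),0\}=q(G)$. Hence $q(G)=q(H)\geq q(G^w)$, as claimed.

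The step that needs care is the bookkeeping in the application of Lemma~\ref{le:c1}: one must align the vertex roles correctly and, above all, notice that after the arc-shift $w$ no longer lies inside a nontrivial strong component, which is precisely what lets Lemma~\ref{le:c2} discard it at no cost to the Q-index. If one wishes to bypass Lemma~\ref{le:c4}, there is a self-contained alternative: let $Z$ be the restriction to $V(G)$ of the Perron vector of $G^w$ and verify $Q(G)Z\geq q(G^w)\,Z$ entrywise. Indeed every vertex other than $u$ has the same out-neighbourhood in $G$ as in $G^w$ (only $u$ trades the out-neighbour $w$ for $v$), so its coordinate of $Q(G)Z$ equals $q(G^w)$ times its $Z$-entry, while at $u$ one picks up the extra term $x_v-x_w$; this is nonnegative because the eigen-equation of $Q(G^w)$ at $w$ reads $x_w+x_v=q(G^w)x_w$, whence $x_v=(q(G^w)-1)x_w$, and it suffices that $q(G^w)\geq 2$. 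The latter holds since $G^w$ contains a directed cycle $\overrightarrow{C_k}$, for which $Q(\overrightarrow{C_k})=I+P$ with $P$ a cyclic permutation matrix and hence $q(\overrightarrow{C_k})=2$, so $q(G^w)\geq q(\overrightarrow{C_k})=2$ by Lemma~\ref{le:c3}. Then $q(G)\geq q(G^w)$ follows from the Collatz--Wielandt lower bound for the spectral radius of a nonnegative matrix.
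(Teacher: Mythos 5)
Your argument is correct, and it is worth noting that the paper itself gives no proof of this lemma at all: it is quoted from \cite{HoYo} as a known result. What you supply is therefore a self-contained derivation from the paper's other quoted tools, and it holds up under checking. The key steps are all sound: $G^w$ is strongly connected and cannot be $\overrightarrow{C_{n+1}}$ (else $G=\overrightarrow{C_n}$); Lemma~\ref{le:c4} with $k=3$ applied to the path $u\,w\,v$ (whose internal vertex $w$ has outdegree $1$ in $G^w$) gives $x_w<x_v$ for the Perron vector of $G^w$; Lemma~\ref{le:c1} applied to the arc $(u,w)$ with new head $v$ then yields $q(H)\geq q(G^w)$ for $H=G^w-\{(u,w)\}+\{(u,v)\}$; and since in $H$ the vertex $w$ has no in-arcs, the strong components of $H$ are exactly $G$ and the isolated vertex $\{w\}$ with $q(\{w\})=0$, so Lemma~\ref{le:c2} gives $q(H)=q(G)$, hence $q(G)\geq q(G^w)$. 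Your alternative route is also valid and arguably more robust, since it bypasses Lemma~\ref{le:c4}: the eigen-equation at $w$ gives $x_v=(q(G^w)-1)x_w$, the surplus at $u$ is $x_v-x_w=(q(G^w)-2)x_w\geq 0$ because $q(G^w)\geq q(\overrightarrow{C_k})=2$ by Lemma~\ref{le:c3}, and Collatz--Wielandt applied to $Q(G)$ with the restricted positive vector $Z$ finishes the proof. Either version would serve as a legitimate replacement for the citation; the first buys economy by reusing the paper's lemma kit, the second buys independence from the monotonicity statement of Lemma~\ref{le:c4}.
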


\section{ The Q-index of strongly connected
bipartite digraphs which contain a complete bipartite subdigraph}
\label{sec:ch-sufficient}

In this section, we will show that if $n\equiv p+q \ (mod \ 2)$ then $B^5_{n,p,q}$
 is the unique bipartite digraph with the minimum Q-index among all strongly connected
bipartite digraphs on $n$ verties which have the complete bipartite subdigraph $\overleftrightarrow{K_{p,q}}$,
otherwise, if $n\not\equiv p+q \ (mod \ 2)$ then $B^1_{n,p,q}$ is the unique bipartite digraph with
the minimum Q-index among all strongly connected
bipartite digraphs on $n$ verties which have the complete bipartite subdigraph $\overleftrightarrow{K_{p,q}} \ (p\geq q\geq1)$.

Let $\overleftrightarrow{K_{p,q}}$ be a complete bipartite digraph with
$V(\overleftrightarrow{K_{p,q}})=V_p \ \cup \ V_q$,
$E(\overleftrightarrow{K_{p,q}})=\{(u,v),$ $(v,u)| u\in V_p, v\in V_q\}$ and $|V_p|=p$, $|V_q|=q$.
Let $\mathcal{G}_{n,p,q}$ denote the set of strongly connected
bipartite digraphs on $n$ vertices which contain a complete bipartite subdigraph $\overleftrightarrow{K_{p,q}}$.
As we all know, if $p+q=n$, then $\mathcal{G}_{n,p,q}=\{\overleftrightarrow{K_{p,q}}\}$,
and $q(\overleftrightarrow{K_{p,q}})=p+q$. Thus we only consider the cases when $p+q\leq n-1$ and $p\geq q\geq1$.
In the rest of this section, we just discuss under this assumption.

Let $B^1_{n,p,q}=(V(B^1_{n,p,q}),E(B^1_{n,p,q}))$ be a digraph obtained by adding a directed path
$\overrightarrow{P_{n-p-q+2}}=v_1v_{p+q+1}v_{p+q+2}\ldots v_{n}v_{p}$ to a complete bipartite digraph
$\overleftrightarrow{K_{p,q}}$ such that $ $ $V(\overleftrightarrow{K_{p,q}}) \cap V(\overrightarrow{P_{n-p-q+2}})=\{v_1,v_p\}$
as shown in Figure \ref{Fig.1}(a), where $V(B^1_{n,p,q})=\{v_1,v_2,\ldots,v_n\}$. Clearly, if $n-p-q$ is odd,
then $B^1_{n,p,q}\in \mathcal{G}_{n,p,q}$.

Let $B^2_{n,p,q}=(V(B^2_{n,p,q}),E(B^2_{n,p,q}))$ be a digraph obtained by adding a directed path
$\overrightarrow{P_{n-p-q+2}}=v_{p+1}v_{p+q+1}v_{p+q+2}\ldots v_{n}v_{p+q}$ to a complete bipartite digraph
$\overleftrightarrow{K_{p,q}}$ such that$ $ $V(\overleftrightarrow{K_{p,q}}) \cap V(\overrightarrow{P_{n-p-q+2}})=\{v_{p+1},v_{p+q}\}$
as shown in Figure \ref{Fig.1}(b), where $V(B^2_{n,p,q})=\{v_1,v_2,$ $\ldots,v_n\}$. Clearly, if $n-p-q$ is odd,
then $B^2_{n,p,q}\in \mathcal{G}_{n,p,q}$.

Let $B^5_{n,p,q}=(V(B^5_{n,p,q}),E(B^5_{n,p,q}))$ be a digraph obtained by adding a directed path
$\overrightarrow{P_{n-p-q+2}}=v_1v_{p+q+1}v_{p+q+2}\ldots v_{n}v_{p+1}$ to a complete bipartite digraph
$\overleftrightarrow{K_{p,q}}$ such that $ $ $V(\overleftrightarrow{K_{p,q}}) \cap V(\overrightarrow{P_{n-p-q+2}})=\{v_1,v_{p+1}\}$
as shown in Figure \ref{Fig.2}(a), where $V(B^5_{n,p,q})=\{v_1,v_2,\ldots,$ $v_n\}$. Clearly, if $n-p-q$ is even, then $B^1_{n,p,q}\in \mathcal{G}_{n,p,q}$.

Let $B^6_{n,p,q}=(V(B^6_{n,p,q}),E(B^6_{n,p,q}))$ be a digraph obtained by adding a directed path
$\overrightarrow{P_{n-p-q+2}}=v_{p+1}v_{p+q+1}v_{p+q+2}\ldots v_{n}v_1$ to a complete bipartite digraph
$\overleftrightarrow{K_{p,q}}$ such that $ $ $V(\overleftrightarrow{K_{p,q}}) \cap V(\overrightarrow{P_{n-p-q+2}})=\{v_1,v_{p+1}\}$
as shown in Figure \ref{Fig.2}(b), where $V(B^6_{n,p,q})=\{v_1,v_2,\ldots,$ $v_n\}$. Clearly, if $n-p-q$ is even, then $B^6_{n,p,q}\in \mathcal{G}_{n,p,q}$.
\begin{figure}[H]
\centering
\includegraphics[scale=0.6]{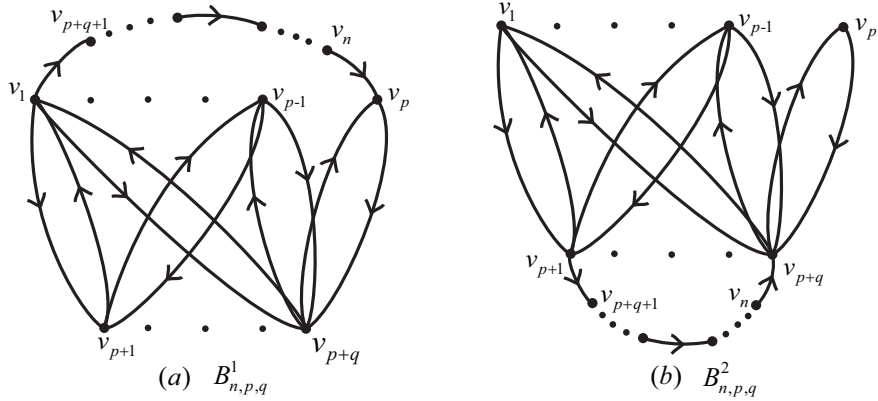}
\caption{$B^1_{n,p,q}$ and $B^2_{n,p,q}$}\label{Fig.1}
\end{figure}
\begin{figure}[H]
\centering
\includegraphics[scale=0.6]{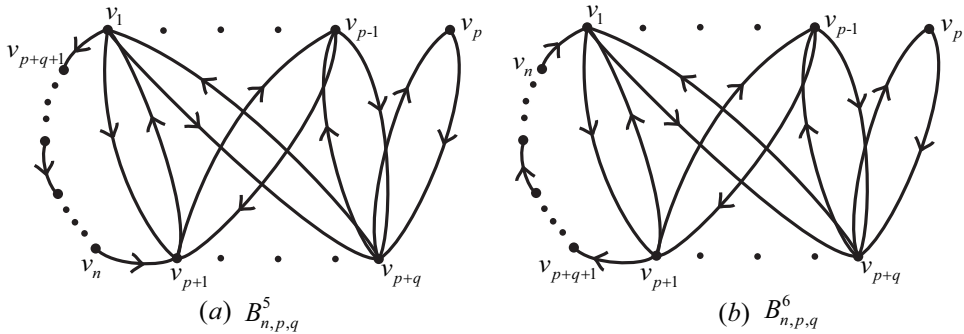}
\caption{$B^5_{n,p,q}$ and $B^6_{n,p,q}$}\label{Fig.2}
\end{figure}

\noindent\begin{theorem}\label{th:ch-1} For digraphs $B^1_{n,p,q}$ and $B^2_{n,p,q}$, as shown in Figure \ref{Fig.1},
$$q(B^1_{n,p,q})\leq q(B^2_{n,p,q}),$$ with equality if and only if $p=q$.
\end{theorem}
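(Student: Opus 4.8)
The plan is to compare the two digraphs by means of the Perron vector of $B^2_{n,p,q}$ and the characterization of $q$ as the spectral radius of the signless Laplacian. Both digraphs are obtained from $\overleftrightarrow{K_{p,q}}$ by attaching a directed path of the same length; the only difference is \emph{where} the path leaves and re-enters the complete bipartite part: in $B^1_{n,p,q}$ the path runs from a vertex $v_1\in V_p$ back to another vertex $v_p\in V_p$, whereas in $B^2_{n,p,q}$ it runs from a vertex $v_{p+1}\in V_q$ back to another vertex $v_{p+q}\in V_q$. Since the outdegrees inside $\overleftrightarrow{K_{p,q}}$ are $q$ on the $V_p$-side and $p$ on the $V_q$-side, and $p\ge q$, one expects that placing the extra pendant-path load on the (already heavier) $V_q$-side yields the larger $Q$-index. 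I would make this precise with a single arc-switching argument driven by Lemma~\ref{le:c1}.

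Here are the steps in order. First, fix $X$ to be the Perron vector of $G:=B^2_{n,p,q}$, so $Q(G)X=q(G)X$ with $X>0$. Second, analyze the structure of $X$ on the complete bipartite part: by symmetry of $\overleftrightarrow{K_{p,q}}$ under permutations of the $q-1$ ``interior'' vertices of $V_q$ and of the $p$ vertices of $V_p$ that are not incident to the attached path endpoints, the entries of $X$ are constant on each such orbit; call $a$ the common value on $V_p$-vertices not among the path's attachment points, $b$ the common value on the $V_q$-vertices that are not the two path-endpoints $v_{p+1},v_{p+q}$, and handle $x_{v_1}, x_{v_p}, x_{v_{p+1}}, x_{v_{p+q}}$ separately. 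Third — this is the heart — establish the inequality $x_{v_{p+1}}\ge x_{v_1}$ and $x_{v_{p+q}}\ge x_{v_p}$ (or the appropriate pair, matching the two positions where one would reroute arcs to turn $B^2_{n,p,q}$ into a copy of $B^1_{n,p,q}$). This should follow because $v_{p+1}$ and $v_{p+q}$ sit on the $V_q$-side, which has larger internal outdegree $p$, while $v_1,v_p$ lie on the $V_p$-side with internal outdegree $q\le p$; I would extract these comparisons from the eigenvalue equations at the relevant vertices, using that the path is attached at the same combinatorial location in both graphs and that all other coordinates enter symmetrically. Fourth, apply Lemma~\ref{le:c1} to the arc(s) that need to be redirected to pass from $B^2_{n,p,q}$ to (a graph isomorphic to) $B^1_{n,p,q}$: since the target endpoint has a Perron entry no smaller than the source endpoint, each switch does not decrease $q$, giving $q(B^1_{n,p,q})\le q(B^2_{n,p,q})$. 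For the equality discussion, note that when $p=q$ the two digraphs are isomorphic (the roles of $V_p$ and $V_q$ are interchangeable), so equality holds; conversely, when $p>q$ the strict version of Lemma~\ref{le:c1} applies because the relevant Perron-entry comparison is strict and the resulting digraph is strongly connected, forcing $q(B^1_{n,p,q})<q(B^2_{n,p,q})$.

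The main obstacle I anticipate is Step three: proving the strict inequality $x_{v_{p+1}}>x_{v_1}$ (and its companion) when $p>q$. The eigenvalue equation at a vertex $v$ reads $q(G)x_v = d_v^{+}x_v + \sum_{(v,u)\in E}x_u$, so comparing $v_1$ (in $V_p$, outdegree $q$ inside $K_{p,q}$, one outgoing path-arc) with $v_{p+1}$ (in $V_q$, outdegree $p$ inside $K_{p,q}$) requires controlling the sums of Perron entries over $V_q\setminus\{v_{p+1},v_{p+q}\}$ versus over $V_p\setminus\{v_p\}$, together with the few exceptional coordinates and the first path-vertex's entry. I expect to reduce everything to a comparison of these aggregated quantities and then close the argument either by a direct sign analysis of the difference $x_{v_{p+1}}-x_{v_1}$ using $q(G)\ge p+q$ (from Lemma~\ref{le:c3}, since $\overleftrightarrow{K_{p,q}}$ is a proper subdigraph), or by a short contradiction argument assuming the reverse inequality. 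If the direct comparison proves delicate, an alternative is to run the switch in the opposite direction — start from the Perron vector of $B^1_{n,p,q}$, show the symmetric entry inequality there, and deduce $q(B^2_{n,p,q})\ge q(B^1_{n,p,q})$ — and pick whichever of the two Perron vectors makes the needed coordinate comparison most transparent.
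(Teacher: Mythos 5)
Your proposal has a genuine structural gap: the transformation you want to drive with Lemma~\ref{le:c1} does not exist. That lemma only replaces an arc $(u,v)$ by $(u,w)$, i.e.\ it changes the \emph{head} of an arc while keeping the tail, so every vertex's outdegree is preserved. But $B^1_{n,p,q}$ and $B^2_{n,p,q}$ have different outdegree sequences when $p>q$: in $B^1_{n,p,q}$ the path leaves a $V_p$-vertex, which therefore has outdegree $q+1$, while in $B^2_{n,p,q}$ the path leaves a $V_q$-vertex of outdegree $p+1$ (and the remaining outdegrees are $q$ on $V_p$, $p$ on $V_q$, $1$ on the path). Hence no sequence of head-redirections can carry one digraph to a digraph isomorphic to the other, and the ``reroute the arcs and compare Perron entries'' plan cannot be executed with the tools available in the paper; the only arc that differs by a head change is $(v_n,v_p)$ versus $(v_n,v_{p+q})$, whereas the arcs $(v_1,v_{p+q+1})$ and $(v_{p+1},v_{p+q+1})$ differ in their tails, which Lemma~\ref{le:c1} cannot touch. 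In addition, even where switches are possible your main argument runs the inequality backwards: if you start from the Perron vector of $B^2_{n,p,q}$ and redirect arcs toward vertices with larger entries, Lemma~\ref{le:c1} says the \emph{resulting} digraph has Q-index at least $q(B^2_{n,p,q})$, which would give $q(B^1_{n,p,q})\geq q(B^2_{n,p,q})$, the opposite of the claim; your parenthetical fallback (start from $B^1_{n,p,q}$ instead) fixes the direction but not the outdegree obstruction.

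The paper proceeds quite differently and avoids this entirely: it writes out the eigenvalue equations for the Perron vector of $B^1_{n,p,q}$, uses the symmetry $x_2=\cdots=x_p$ and $x_{p+1}=\cdots=x_{p+q}$ together with $q(B^1_{n,p,q})>p+q$ (Lemma~\ref{le:c3}) to eliminate variables, and shows that $q(B^1_{n,p,q})$ is the largest real root of $f(x)=(x-1)^{n-p-q}[x^3-(p+2q+1)x^2+(q^2+pq+p+q)x-q]-q$, with $q(B^2_{n,p,q})$ the largest root of the analogous polynomial $g$ obtained by swapping $p$ and $q$. Since $f(x)-g(x)=(x-1)^{n-p-q}(p-q)[x^2-(p+q)x+1]+p-q>0$ for $x>p+q$ and both roots exceed $p+q$, one gets $q(B^1_{n,p,q})<q(B^2_{n,p,q})$ for $p>q$, with equality exactly when $p=q$ (the two digraphs being isomorphic then, as you correctly noted). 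If you want to keep a switching flavour, you would need a tail-switching or grafting lemma not present in the paper; with the given lemmas, the polynomial comparison is the viable route.
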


\begin{proof} If $p=q$, then $B^2_{n,p,q}\cong B^1_{n,p,q}$. Hence $q(B^1_{n,p,q})=(B^2_{n,p,q})$. Otherwise $p>q$, let $X=(x_1,x_2,\ldots,x_n)^T$ be the
Perron vector corresponding to $q(B^1_{n,p,q})$ where $x_i$ corresponds to the vertex $v_i$. Since $$(Q(B^1_{n,p,q})X)_i=q(B^1_{n,p,q})x_i=qx_i+\sum\limits_{u\in N^+_i}x_u=qx_i+\sum\limits_{u\in V_q}x_u, \ (i=2,3,\ldots,p),$$
then $(q(B^1_{n,p,q})-q)x_i=\sum\limits_{u\in V_q}x_u$. Noting that
$\overleftrightarrow{K_{p,q}}$ is a proper subdigraph of $B^1_{n,p,q}$ and $ $
$B^1_{n,p,q}$ is strongly connected, then by Lemma \ref{le:c3}, we have $q(B^1_{n,p,q})>q(\overleftrightarrow{K_{p,q}})=p+q$.
Thus
\begin{equation}\label{eq:1}
x_2=x_3=\ldots=x_p\triangleq x_p.
\end{equation}
 Since $$(Q(B^1_{n,p,q})X)_j=q(B^1_{n,p,q})x_j
 =px_j+\sum\limits_{u\in N^+_j}x_u=px_j+\sum\limits_{u\in V_p}x_u, \ (j=p+1,p+2,\ldots,p+q),$$
similarly, then we have
\begin{equation}\label{eq:2}
x_{p+1}=x_{p+2}=\ldots=x_{p+q}\triangleq x_{p+1}.
\end{equation}
From $Q(B^1_{n,p,q})X=q(B^1_{n,p,q})X$, (\ref{eq:1}) and (\ref{eq:2}), we have
\begin{equation}\label{eq:3}
q(B^1_{n,p,q})x_1=(q+1)x_1+x_{p+q+1}+qx_{p+1},
\end{equation}
\begin{equation}\label{eq:4}
q(B^1_{n,p,q})x_p=qx_p+qx_{p+1},
\end{equation}
\begin{equation}\label{eq:5}
q(B^1_{n,p,q})x_{p+1}=px_{p+1}+x_1+(p-1)x_p,
\end{equation}
\begin{equation}\label{eq:6}
(q(B^1_{n,p,q})-1)^{n-p-q}x_{p+q+1}=x_p.
\end{equation}
From (\ref{eq:4}), we have
\begin{equation}\label{eq:7}
(q(B^1_{n,p,q})-q)x_p=qx_{p+1}.
\end{equation}
multiply both sides of the equation $(\ref{eq:7})$ by $(q(B^1_{n,p,q})-p)$, we have
\begin{eqnarray}\label{eq:8}
(q(B^1_{n,p,q})-q)(q(B^1_{n,p,q})-p)x_p&=&(q(B^1_{n,p,q})-p)qx_{p+1} \nonumber \\
&=&qx_1+q(p-1)x_p, \ \  \ \ ( \ \textrm {by \ (\ref{eq:5})})
\end{eqnarray}
multiply both sides of the equation $(\ref{eq:8})$ by $(q(B^1_{n,p,q})-q-1)$,
we have
\begin{align*}
&(q(B^1_{n,p,q})-q)(q(B^1_{n,p,q})-p)(q(B^1_{n,p,q})-q-1)x_p \\
&=q(q(B^1_{n,p,q})-q-1)x_1+q(p-1)(q(B^1_{n,p,q})-q-1)x_p \\
&=qx_{p+q+1}+q^2x_{p+1}+q(p-1)(q(B^1_{n,p,q})-q-1)x_p   \ \ ( \ \textrm {by (\ref{eq:3})}) \\
&=qx_{p+q+1}+q(q(B^1_{n,p,q})-q)x_p+q(p-1)(q(B^1_{n,p,q})-q-1)x_p  \ \ ( \ \textrm {by (\ref{eq:7})}) \\
&=q\frac{1}{(q(B^1_{n,p,q})-1)^{n-p-q}}x_p+q(q(B^1_{n,p,q})-q)x_p+q(p-1)(q(B^1_{n,p,q})-q-1)x_p, \ \ ( \ \textrm {by (\ref{eq:8})})
\end{align*}
multiply both sides of the above equation by $(q(B^1_{n,p,q})-1)^{n-p-q}$, we have
\begin{align*}
&(q(B^1_{n,p,q})-1)^{n-p-q}(q(B^1_{n,p,q})-q)(q(B^1_{n,p,q})-p)(q(B^1_{n,p,q})-q-1)x_p \\
&=[q+q(q(B^1_{n,p,q})-q)(q(B^1_{n,p,q})-1)^{n-p-q} \\
&+q(p-1)(q(B^1_{n,p,q})-q-1)(q(B^1_{n,p,q})-1)^{n-p-q}]x_p=0.
\end{align*}
Thus
\begin{align*}
&(q(B^1_{n,p,q})-1)^{n-p-q}[q^3(B^1_{n,p,q})-(p+2q+1)q^2(B^1_{n,p,q}) \\
&+(q^2+pq+p+q)q(B^1_{n,p,q})-q]-q=0
\end{align*}
Let $f(x)=(x-1)^{n-p-q}[x^3-(p+2q+1)x^2+(q^2+pq+p+q)x-q]-q$,
it is not difficult to see that $q(B^1_{n,p,q})$ is the largest
real root of $f(x)=0$. Similarly,
let $g(x)=(x-1)^{n-p-q}[x^3-(q+2p+1)x^2+(p^2+pq+q+p)x-p]-p$, then
$q(B^2_{n,p,q})$ is the largest real root of $g(x)=0$. Since $p>q$,
thus $f(x)-g(x)=(x-1)^{n-p-q}(p-q)[x^2-(p+q)x+1]+p-q>0$, for all $x>p+q$.
Since $q(B^1_{n,p,q})>p+q$ and $q(B^2_{n,p,q})>p+q$, then we have $q(B^1_{n,p,q})<q(B^2_{n,p,q})$.
Therefore, $q(B^1_{n,p,q})\leq q(B^2_{n,p,q})$ with equality if and only if $p=q$.
\end{proof}

\noindent\begin{lemma}\label{le:c6}
 Let $X=(x_1,x_2,\ldots,x_n)^T$ be the
Perron vector corresponding to $q(B^1_{n,p,q})$, where $x_i$ corresponds to the vertex $v_i$,
then we have

(i) $x_1>x_p$;

(ii) $x_{p+1}>x_p$.
\end{lemma}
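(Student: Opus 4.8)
The plan is to read both inequalities off the eigenvalue equations already established in the proof of Theorem~\ref{th:ch-1}, using the positivity of the Perron vector $X$ (which holds since $B^1_{n,p,q}$ is strongly connected) and the strict bound $q(B^1_{n,p,q})>p+q$ obtained there from Lemma~\ref{le:c3}. Throughout I would abbreviate $\rho=q(B^1_{n,p,q})$ and recall from (\ref{eq:1})--(\ref{eq:2}) that $x_p$ and $x_{p+1}$ stand for the common values $x_2=\cdots=x_p$ and $x_{p+1}=\cdots=x_{p+q}$, and that $x_1,x_p,x_{p+1},x_{p+q+1}$ are all strictly positive.

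For part (ii) I would simply invoke (\ref{eq:7}), that is $(\rho-q)x_p=qx_{p+1}$, to write $x_{p+1}=\frac{\rho-q}{q}\,x_p$. Since $p\geq q$ we have $p+q\geq 2q$, hence $\rho>p+q\geq 2q$, so $\rho-q>q$ and therefore $\frac{\rho-q}{q}>1$. As $x_p>0$, this yields $x_{p+1}>x_p$.

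For part (i) I would start from (\ref{eq:3}), rewritten as $(\rho-q-1)x_1=x_{p+q+1}+qx_{p+1}$, and substitute $qx_{p+1}=(\rho-q)x_p$ from (\ref{eq:7}) to get $(\rho-q-1)x_1=x_{p+q+1}+(\rho-q)x_p$. Since $x_{p+q+1}>0$ and $x_p>0$, the right-hand side is larger than $(\rho-q)x_p=(\rho-q-1)x_p+x_p>(\rho-q-1)x_p$, so $(\rho-q-1)x_1>(\rho-q-1)x_p$. Finally $\rho-q-1>0$, because $\rho>p+q\geq q+1$ (as $p\geq1$), and dividing by $\rho-q-1$ gives $x_1>x_p$.

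I do not expect a genuine obstacle here: the statement is a direct algebraic consequence of the relations set up in the proof of Theorem~\ref{th:ch-1}. The only points deserving a word of care are the strictness of the two inequalities, which is guaranteed by the strict bound $\rho>p+q$ together with the strict positivity of the Perron vector, and the two elementary sign checks $\rho>2q$ and $\rho-q-1>0$, both of which follow from $p\geq q\geq1$.
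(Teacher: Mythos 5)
Your proposal is correct, and it works with exactly the same raw material as the paper (the eigenvector equations at $v_1$, $v_p$, $v_{p+1}$ together with $q(B^1_{n,p,q})>p+q$ from Lemma~\ref{le:c3}), but it organizes the argument differently, most notably for part (ii). The paper proves (i) by subtracting the equation at $v_p$ from the one at $v_1$ to get $(q(B^1_{n,p,q})-q)(x_1-x_p)=x_1+x_{p+q+1}>0$, and then proves (ii) by comparing the equations at $v_p$ and $v_{p+1}$, which yields $q(B^1_{n,p,q})(x_{p+1}-x_p)>(p-q)(x_{p+1}+x_p)\geq 0$ and therefore needs both the already-established inequality $x_1>x_p$ and the hypothesis $p\geq q$. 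Your route for (ii) is shorter and self-contained: from $(\rho-q)x_p=qx_{p+1}$ (the paper's (\ref{eq:7})) and $\rho>p+q\geq 2q$ you read off $x_{p+1}/x_p=(\rho-q)/q>1$ directly, without invoking (i) or the equation at $v_{p+1}$; your (i) is essentially the paper's computation rearranged as a substitution rather than a subtraction. What your version buys is a cleaner logical structure (the two parts are independent, and (ii) uses only one eigenvector equation); what the paper's version buys is that the same subtraction template is reused verbatim in Theorem~\ref{th:ch-3}. One small caveat worth stating explicitly if you write this up: equations (\ref{eq:1})--(\ref{eq:7}) are derived in the proof of Theorem~\ref{th:ch-1} under the running case $p>q$, so you should note (as is immediate) that their derivation uses only the structure of $B^1_{n,p,q}$ and $\rho>p+q$, hence they remain valid when $p=q$.
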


\begin{proof}

(i) By Theorem \ref{th:ch-1}, we have
\begin{equation}\label{eq:9}
(Q(B^1_{n,p,q})X)_1=q(B^1_{n,p,q})x_1=(q+1)x_1+\sum\limits_{u\in N^+_1}x_u=(q+1)x_1+x_{p+q+1}+qx_{p+1},
\end{equation}
\begin{equation}\label{eq:10}
(Q(B^1_{n,p,q})X)_p=q(B^1_{n,p,q})x_p=qx_p+\sum\limits_{u\in N^+_p}x_u=qx_p+qx_{p+1},
\end{equation}
From (\ref{eq:9}) and (\ref{eq:10}), then we have $$(q(B^1_{n,p,q})-q)(x_1-x_p)=x_1+x_{p+q+1}>0.$$
Thus $x_1>x_p$ by $q(B^1_{n,p,q})>p+q$.

(ii) Since
\begin{equation}\label{eq:11}
(Q(B^1_{n,p,q})X)_{p+1}=q(B^1_{n,p,q})x_{p+1}
 =px_{p+1}+\sum\limits_{u\in N^+_{p+1}}x_u=px_{p+1}+x_1+(p-1)x_p,
 \end{equation}
From (\ref{eq:10}) and (\ref{eq:11}), then we have
$$q(B^1_{n,p,q})(x_{p+1}-x_p)=px_{p+1}-qx_{p+1}+x_1+(p-1-q)x_p.$$
Since $x_1>x_p$, then $$q(B^1_{n,p,q})(x_{p+1}-x_p)>(p-q)(x_{p+1}+x_p)\geq 0.$$
 Thus $x_{p+1}>x_p$ by $q(B^1_{n,p,q})>p+q$.
\end{proof}

\noindent\begin{theorem}\label{th:ch-2} Let $B^3_{n,p,q}=B^1_{n,p,q}-\{(v_n,v_p)\}+\{(v_n,v_1)\}$. Then
$$q(B^3_{n,p,q})>q(B^1_{n,p,q}).$$
\end{theorem}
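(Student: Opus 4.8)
The plan is to apply Lemma \ref{le:c1} with the modification that sends the arc $(v_n, v_p)$ to $(v_n, v_1)$. Recall $B^3_{n,p,q} = B^1_{n,p,q} - \{(v_n,v_p)\} + \{(v_n,v_1)\}$, so in the notation of Lemma \ref{le:c1} we take $u = v_n$, $v = v_p$, $w = v_1$. The lemma tells us that if $x_w \geq x_v$, i.e. $x_1 \geq x_p$ (where $X$ is the Perron vector of $B^1_{n,p,q}$), then $q(B^3_{n,p,q}) \geq q(B^1_{n,p,q})$, and the inequality is strict provided $B^3_{n,p,q}$ is strongly connected and $x_1 > x_p$.

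The key inputs are therefore: first, $x_1 > x_p$, which is exactly Lemma \ref{le:c6}(i); and second, that $B^3_{n,p,q}$ is strongly connected. For the latter I would argue directly from the structure: $B^1_{n,p,q}$ consists of $\overleftrightarrow{K_{p,q}}$ together with the directed path $v_1 v_{p+q+1} v_{p+q+2} \cdots v_n v_p$. After the arc swap, we have the directed path $v_1 v_{p+q+1} \cdots v_n v_1$, which forms a directed cycle through $v_1$. Since $v_1 \in V_p$ and $\overleftrightarrow{K_{p,q}}$ is strongly connected with every vertex of $V_p \cup V_q$ reachable from and able to reach $v_1$, and every vertex $v_{p+q+1}, \ldots, v_n$ lies on a directed cycle through $v_1$, every vertex can reach $v_1$ and be reached from $v_1$; hence $B^3_{n,p,q}$ is strongly connected.

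With these two facts in hand, Lemma \ref{le:c1} gives $q(B^3_{n,p,q}) > q(B^1_{n,p,q})$ immediately. One technical caveat worth noting explicitly: since $v_1 \notin N^+_{v_n}(B^1_{n,p,q})$ (the only out-neighbor of $v_n$ in $B^1_{n,p,q}$ is $v_p$), the operation does not create a multiarc, so $B^3_{n,p,q}$ is a genuine simple digraph and Lemma \ref{le:c1} applies in its clean form without the multiarc complication mentioned parenthetically there.

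I do not expect a serious obstacle here — the proof is essentially a one-line application of Lemma \ref{le:c1} once Lemma \ref{le:c6}(i) is invoked. The only thing requiring a moment of care is the verification of strong connectedness of $B^3_{n,p,q}$, since strictness of the inequality in Lemma \ref{le:c1} depends on it; but as sketched above this follows readily from the presence of the directed cycle $v_1 v_{p+q+1} \cdots v_n v_1$ together with the strong connectedness of $\overleftrightarrow{K_{p,q}}$ sharing the vertex $v_1$.
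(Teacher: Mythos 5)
Your proposal is correct and is essentially the paper's own proof: the paper likewise applies Lemma \ref{le:c1} with the Perron vector of $B^1_{n,p,q}$, using Lemma \ref{le:c6}(i) for $x_1>x_p$ and simply asserting that $B^3_{n,p,q}$ is strongly connected. Your explicit verification of strong connectedness and the remark that no multiarc arises are harmless additions, not a different route.
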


\begin{proof}
Clearly $B^3_{n,p,q}$ is strongly connected. Let $X=(x_1,x_2,\ldots,x_n)^T$ be the
Perron vector corresponding to $q(B^1_{n,p,q})$, where $x_i$ corresponds to the vertex $v_i$.
Thus $q(B^3_{n,p,q})>q(B^1_{n,p,q})$ by Lemmas \ref{le:c1} and \ref{le:c6}.
\end{proof}

\noindent\begin{theorem}\label{th:ch-3} Let $B^4_{n,p,q}=B^2_{n,p,q}-\{(v_n,v_{p+q})\}+\{(v_n,v_{p+1})\}$. Then
$$q(B^4_{n,p,q})>q(B^2_{n,p,q}).$$
\end{theorem}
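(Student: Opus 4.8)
The plan is to argue exactly as in the proof of Theorem~\ref{th:ch-2}, with the pair of vertices $(v_1,v_p)$ used there replaced by the pair $(v_{p+1},v_{p+q})$ here. First I would check that $B^4_{n,p,q}$ is strongly connected: deleting $(v_n,v_{p+q})$ and inserting $(v_n,v_{p+1})$ closes the appended path into a directed cycle $v_{p+1}v_{p+q+1}\cdots v_n v_{p+1}$, and since the subdigraph $\overleftrightarrow{K_{p,q}}$ is left intact, $v_{p+1}$ reaches and is reached from every vertex of $V_p$, every vertex of $V_p$ reaches every vertex of $V_q$ (in particular $v_{p+q}$), and every internal path vertex reaches $v_{p+1}$ along the new cycle; hence $B^4_{n,p,q}$ is strongly connected. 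By Lemma~\ref{le:c1}, applied with $u=v_n$, $v=v_{p+q}$, $w=v_{p+1}$, it then suffices to prove that the Perron vector $X=(x_1,\dots,x_n)^T$ of $B^2_{n,p,q}$ satisfies $x_{p+1}>x_{p+q}$.

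To obtain $x_{p+1}>x_{p+q}$ I would reproduce, now for $B^2_{n,p,q}$, the short computation used in Lemma~\ref{le:c6}(i). Write $q:=q(B^2_{n,p,q})$; since $\overleftrightarrow{K_{p,q}}$ is a proper subdigraph of the strongly connected digraph $B^2_{n,p,q}$, Lemma~\ref{le:c3} gives $q>p+q>p$. In $B^2_{n,p,q}$ the out-neighbourhood of $v_{p+1}$ is $V_p\cup\{v_{p+q+1}\}$ and that of $v_{p+q}$ is $V_p$, so the eigenvalue equations at these two vertices read
\[
q\,x_{p+1}=(p+1)x_{p+1}+\sum_{u\in V_p}x_u+x_{p+q+1},\qquad q\,x_{p+q}=p\,x_{p+q}+\sum_{u\in V_p}x_u.
\]
Subtracting, the common term $\sum_{u\in V_p}x_u$ cancels and one gets $(q-p)(x_{p+1}-x_{p+q})=x_{p+1}+x_{p+q+1}>0$, whence $x_{p+1}>x_{p+q}$ because $q>p$. (Equivalently, this is just Lemma~\ref{le:c6}(i) for the digraph in which the appended path is attached to the colour class of size $q$ rather than that of size $p$; the proof of Lemma~\ref{le:c6}(i) uses only $q(\cdot)>p+q$, not $p\ge q$.) Combining $x_{p+1}>x_{p+q}$ with the strong connectivity of $B^4_{n,p,q}$, Lemma~\ref{le:c1} yields $q(B^4_{n,p,q})>q(B^2_{n,p,q})$, as desired.

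I do not anticipate a genuine difficulty: the argument is essentially a transcription of Theorem~\ref{th:ch-2}, so the only step requiring more than a phrase is the verification of strong connectivity sketched above. The minor points to keep in mind are the degenerate parameter choices: when $q=1$ one has $v_{p+1}=v_{p+q}$, so the redirection is void and the statement is to be read as trivial; and when $n-p-q=1$ the ``added path'' reduces to the single arc $v_n v_{p+q}$, so that $B^4_{n,p,q}$ contains the digon on $\{v_{p+1},v_n\}$, which is still a simple strongly connected bipartite digraph, and everything above goes through unchanged.
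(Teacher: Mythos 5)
Your proposal is correct and follows essentially the same route as the paper: apply Lemma \ref{le:c1} with $u=v_n$, $v=v_{p+q}$, $w=v_{p+1}$, and establish $x_{p+1}>x_{p+q}$ by subtracting the eigenvalue equations at $v_{p+1}$ and $v_{p+q}$ to get $(q(B^2_{n,p,q})-p)(x_{p+1}-x_{p+q})=x_{p+1}+x_{p+q+1}>0$. Your extra remarks on strong connectivity and the degenerate case $q=1$ (where $v_{p+1}=v_{p+q}$ and the statement degenerates) go slightly beyond what the paper records, but the core argument is identical.
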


\begin{proof}
Clearly $B^4_{n,p,q}$ is strongly connected. Let $X=(x_1,x_2,\ldots,x_n)^T$ be the
Perron vector corresponding to $q(B^2_{n,p,q})$, where $x_i$ corresponds to the vertex $v_i$.
By Lemma \ref{le:c1}, we only need to prove $x_{p+1}>x_{p+q}$.
\begin{equation}\label{eq:12}
(Q(B^2_{n,p,q})X)_{p+1}=q(B^2_{n,p,q})x_{p+1}=(p+1)x_{p+1}+x_{p+q+1}+\sum\limits_{u\in V_p}x_u,
 \end{equation}
\begin{equation}\label{eq:13}
(Q(B^2_{n,p,q})X)_{p+q}=q(B^2_{n,p,q})x_{p+q}=px_{p+q}+\sum\limits_{u\in V_p}x_u,
 \end{equation}
From (\ref{eq:12}) and (\ref{eq:13}), then we have
$$(q(B^2_{n,p,q})-p)(x_{p+1}-x_{p+q})=x_{p+1}+x_{p+q+1}>0$$
Thus $x_{p+1}>x_{p+q}$ by $q(B^2_{n,p,q})>p+q$.
\end{proof}

As the proof of the following result is similar to that of Theorem \ref{th:ch-1}, we omit the details.

\noindent\begin{theorem}\label{th:ch-4} For digraphs $B^5_{n,p,q}$ and $B^6_{n,p,q}$, as shown in Figure \ref{Fig.2},
$$q(B^5_{n,p,q})\leq q(B^6_{n,p,q}),$$ with equality if and only if $p=q$.
\end{theorem}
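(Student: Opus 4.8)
The plan is to mirror the proof of Theorem \ref{th:ch-1} almost verbatim, since $B^5_{n,p,q}$ and $B^6_{n,p,q}$ differ from $B^1_{n,p,q}$ and $B^2_{n,p,q}$ only in which vertex of $\overleftrightarrow{K_{p,q}}$ serves as the terminal vertex of the attached directed path (the path now ends at a vertex of the $q$-side instead of the $p$-side in $B^5$, and symmetrically for $B^6$). The $p=q$ case is immediate: then $B^5_{n,p,q}\cong B^6_{n,p,q}$ by the obvious isomorphism swapping the two color classes, so $q(B^5_{n,p,q})=q(B^6_{n,p,q})$. Assume henceforth $p>q$.

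First I would let $X=(x_1,\ldots,x_n)^T$ be the Perron vector of $q(B^5_{n,p,q})$ and, exactly as in (\ref{eq:1}) and (\ref{eq:2}), use the eigenequation at the internal vertices of the two color classes together with $q(B^5_{n,p,q})>q(\overleftrightarrow{K_{p,q}})=p+q$ (from Lemma \ref{le:c3}, since $\overleftrightarrow{K_{p,q}}$ is a proper strongly connected subdigraph) to conclude that all entries on the $p$-side that are not path-endpoints are equal, and likewise on the $q$-side. Writing out the eigenequations at the distinguished vertices $v_1$, $v_{p+1}$ (the endpoints of the path lying in $\overleftrightarrow{K_{p,q}}$) and at a generic path-interior vertex, and eliminating variables by the same chain of multiplications by $(q-q(\cdot))$-type factors and by $(x-1)^{n-p-q}$ used in Theorem \ref{th:ch-1}, I would arrive at a polynomial $\tilde f(x)=(x-1)^{n-p-q}[x^3-\alpha x^2+\beta x-\gamma]-\gamma$ whose largest real root is $q(B^5_{n,p,q})$, where $\alpha,\beta,\gamma$ are explicit polynomials in $p,q$; the analogous $\tilde g(x)$ for $B^6_{n,p,q}$ is obtained by interchanging $p$ and $q$ in those coefficients. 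The structure of the incidence — which side the path enters and leaves — will change the precise coefficients from those in Theorem \ref{th:ch-1}, so this bookkeeping is where care is needed, but it is entirely mechanical.

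The decisive step is then to compare $\tilde f$ and $\tilde g$ on $(p+q,\infty)$ and show $\tilde f(x)-\tilde g(x)>0$ there. By the $p\leftrightarrow q$ symmetry of the construction the difference factors as $(p-q)$ times something of the form $(x-1)^{n-p-q}\big(x^2-(p+q)x+1\big)+(p-q)$, and since $x>p+q\ge 2$ forces $x^2-(p+q)x+1=x(x-(p+q))+1>0$ and $(x-1)^{n-p-q}>0$, the whole expression is strictly positive. Because both $q(B^5_{n,p,q})$ and $q(B^6_{n,p,q})$ exceed $p+q$ and are the largest roots of $\tilde f$ and $\tilde g$ respectively, positivity of $\tilde f-\tilde g$ on this interval yields $q(B^5_{n,p,q})<q(B^6_{n,p,q})$.

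The main obstacle I anticipate is getting the reduced cubic-times-$(x-1)^{n-p-q}$ form correct for $B^5_{n,p,q}$: unlike in $B^1_{n,p,q}$, the path's terminal vertex $v_{p+1}$ lies in the $q$-side, so it has outdegree $p$ (not $q$) and its out-neighborhood in $\overleftrightarrow{K_{p,q}}$ is all of $V_p$, including $v_1$; one must track carefully how $v_1$ (outdegree $q+1$) and $v_{p+1}$ interact, which shifts several coefficients relative to Theorem \ref{th:ch-1}. Once the coefficient pattern is pinned down and its $p\leftrightarrow q$ symmetry is confirmed, the concluding inequality is routine, which is presumably why the authors omit the details and simply cite the analogy with Theorem \ref{th:ch-1}.
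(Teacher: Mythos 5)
Your plan is exactly the route the paper intends: the authors omit this proof, citing the analogy with Theorem \ref{th:ch-1}, and your sketch reproduces that analogy correctly in outline (the $p=q$ isomorphism, $q(B^5_{n,p,q}),q(B^6_{n,p,q})>p+q$ via Lemma \ref{le:c3}, reduction to a characteristic equation, comparison of largest roots).

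The one point you flagged as risky is, however, where your sketch is off, and as written your concluding positivity argument would not quite go through. Carrying out the elimination for $B^5_{n,p,q}$, the path now ends at $v_{p+1}$, so the tail relation is $(q(B^5_{n,p,q})-1)^{n-p-q}x_{p+q+1}=x_{p+1}$ rather than $=x_p$; using $(q(B^5_{n,p,q})-q)x_p=qx_{p+1}$ this substitution produces the equation
$\tilde f(x)=(x-1)^{n-p-q}\left[x^3-(p+2q+1)x^2+(q^2+pq+p+q)x-q\right]-(x-q)=0$.
Thus the cubic factor is identical to the one in Theorem \ref{th:ch-1}, but the trailing term is the linear factor $-(x-q)$, not the constant $-\gamma$ you predicted. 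Interchanging $p$ and $q$ gives $\tilde g$ for $B^6_{n,p,q}$, and then
$\tilde f(x)-\tilde g(x)=(p-q)\left\{(x-1)^{n-p-q}\left[x^2-(p+q)x+1\right]-1\right\}$,
with a $-1$ where Theorem \ref{th:ch-1} has a $+1$. Consequently your stated justification, that $x^2-(p+q)x+1>0$ and $(x-1)^{n-p-q}>0$, is not sufficient; you need the slightly stronger observation that for $x>p+q\geq 2$ one has $x-1>1$, hence $(x-1)^{n-p-q}>1$, and $x^2-(p+q)x+1=x\bigl(x-(p+q)\bigr)+1>1$, so the product exceeds $1$ and the difference is strictly positive for $p>q$. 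With that one-line correction the rest of your argument proceeds exactly as in Theorem \ref{th:ch-1} and yields $q(B^5_{n,p,q})<q(B^6_{n,p,q})$ for $p>q$.
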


\noindent\begin{theorem}\label{th:ch-5} For digraphs $B^1_{n,p,q}$ and $B^5_{n,p,q}$, as shown in Figure \ref{Fig.1} and \ref{Fig.2},
$$q(B^1_{n,p,q})< q(B^5_{n-1,p,q}).$$
\end{theorem}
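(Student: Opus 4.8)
The plan is to route the comparison through the auxiliary digraph $B^5_{n,p,q}$: we pass from $B^1_{n,p,q}$ to $B^5_{n,p,q}$ by a single arc relocation (Lemma \ref{le:c1}), and from $B^5_{n,p,q}$ back to $B^5_{n-1,p,q}$ by contracting a subdivision vertex (Lemma \ref{le:c5}). The two combinatorial facts that drive this are: (1) $B^5_{n,p,q}$ differs from $B^1_{n,p,q}$ only in the terminal vertex of the attached directed path, i.e. $B^5_{n,p,q}=B^1_{n,p,q}-\{(v_n,v_p)\}+\{(v_n,v_{p+1})\}$; and (2) subdividing the last arc of the directed path attached in $B^5_{n-1,p,q}$ by a new vertex $v_n$ produces exactly $B^5_{n,p,q}$, that is, $B^5_{n,p,q}=(B^5_{n-1,p,q})^{v_n}$ in the notation introduced before Lemma \ref{le:c5}. (Throughout we assume $n\geq p+q+2$, so that $B^5_{n-1,p,q}$ is a well-defined simple digraph.)

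\textbf{Step 1: $q(B^5_{n,p,q})>q(B^1_{n,p,q})$.} Let $X=(x_1,\ldots,x_n)^T$ be the Perron vector of $B^1_{n,p,q}$. By Lemma \ref{le:c6}(ii) we have $x_{p+1}>x_p$. Since $(v_n,v_p)\in E(B^1_{n,p,q})$, since $(v_n,v_{p+1})\notin E(B^1_{n,p,q})$ (so that $B^5_{n,p,q}$ is simple), and since $B^5_{n,p,q}$ is strongly connected — from $v_{p+1}$ one reaches every vertex of $\overleftrightarrow{K_{p,q}}$, hence every vertex of the attached path, while $v_{p+1}$ is reachable from every vertex — Lemma \ref{le:c1} applied with $u=v_n$, $v=v_p$, $w=v_{p+1}$ and $x_w>x_v$ yields $q(B^5_{n,p,q})>q(B^1_{n,p,q})$.

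\textbf{Step 2: $q(B^5_{n-1,p,q})\geq q(B^5_{n,p,q})$.} The digraph $B^5_{n-1,p,q}$ is strongly connected and contains $\overleftrightarrow{K_{p,q}}$, so $B^5_{n-1,p,q}\neq\overrightarrow{C_{n-1}}$; by fact (2) above, $B^5_{n,p,q}$ is obtained from $B^5_{n-1,p,q}$ by the subdivision operation of Lemma \ref{le:c5}, whence $q(B^5_{n-1,p,q})\geq q(B^5_{n,p,q})$. Combining the two steps gives $q(B^1_{n,p,q})<q(B^5_{n,p,q})\leq q(B^5_{n-1,p,q})$, as required.

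The main point requiring care — rather than a genuine obstacle — is the bookkeeping that identifies $B^5_{n,p,q}$ simultaneously as the arc-relocated copy of $B^1_{n,p,q}$ and as the subdivision $(B^5_{n-1,p,q})^{v_n}$, together with the verification that $B^5_{n,p,q}$ is strongly connected so that the strict form of Lemma \ref{le:c1} applies; note that $B^5_{n,p,q}$ need not be bipartite in the range where the theorem is used, but Lemmas \ref{le:c1} and \ref{le:c5} require only strong connectivity. Should one prefer a purely computational proof in the style of Theorem \ref{th:ch-1}, one shows that $q(B^5_{n-1,p,q})$ is the largest root of $(x-1)^{n-p-q-1}P(x)-(x-q)$, where $P(x)=x^3-(p+2q+1)x^2+(q^2+pq+p+q)x-q$ is the same cubic that appears in $f(x)=(x-1)^{n-p-q}P(x)-q$ for $B^1_{n,p,q}$; writing $\mu=q(B^1_{n,p,q})$ and using the identity $(\mu-1)^{n-p-q}P(\mu)=q$, that polynomial evaluates at $\mu$ to $\frac{q}{\mu-1}-(\mu-q)=\frac{\mu(q+1-\mu)}{\mu-1}$, which is negative because $\mu>p+q\geq q+1$, and hence its largest root strictly exceeds $\mu$.
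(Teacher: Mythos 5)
Your proof is correct and follows essentially the same route as the paper: use Lemma \ref{le:c6}(ii) with Lemma \ref{le:c1} on the relocation $B^5_{n,p,q}=B^1_{n,p,q}-\{(v_n,v_p)\}+\{(v_n,v_{p+1})\}$ to get $q(B^5_{n,p,q})>q(B^1_{n,p,q})$, then Lemma \ref{le:c5} on the subdivision $B^5_{n,p,q}=(B^5_{n-1,p,q})^{v_n}$ to get $q(B^5_{n-1,p,q})\geq q(B^5_{n,p,q})$. Your added verifications (strong connectivity, simplicity, the correct non-strict use of Lemma \ref{le:c5}) and the optional polynomial check are fine but not a different method.
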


\begin{proof} Since $B^5_{n,p,q}=B^1_{n,p,q}-\{(v_n,v_p)\}+\{(v_n,v_{p+1})\}$
 and $B^5_{n,p,q}$ is strongly connected. Let $X=(x_1,x_2,\ldots,x_n)^T$ be the
Perron vector corresponding to $q(B^1_{n,p,q})$, where $x_i$ corresponds to the vertex $v_i$.
By Lemma \ref{le:c6}, we have $x_{p+1}>x_{p}$.
Thus by Lemma \ref{le:c1}, $q(B^5_{n,p,q})>q(B^1_{n,p,q})$. Then $q(B^5_{n-1,p,q})>q(B^5_{n,p,q})$ by
Lemma \ref{le:c5}. Therefore, we have $q(B^1_{n,p,q})<q(B^5_{n-1,p,q})$.
\end{proof}

\noindent\begin{theorem}\label{th:ch-6} For digraphs $B^1_{n,p,q}$ and $B^5_{n,p,q}$, as shown in Figure \ref{Fig.1} and \ref{Fig.2},
$$q(B^5_{n,p,q})\leq q(B^1_{n-1,p,q}).$$
\end{theorem}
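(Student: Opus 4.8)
The plan is to compare the characteristic-type equations for $q(B^5_{n,p,q})$ and $q(B^1_{n-1,p,q})$ by the same device used in the proof of Theorem~\ref{th:ch-1}: derive a single-variable polynomial whose largest real root is the $Q$-index of each digraph, then show that one polynomial dominates the other on the interval $(p+q,\infty)$, where both $Q$-indices lie (each exceeds $p+q$ by Lemma~\ref{le:c3}, since $\overleftrightarrow{K_{p,q}}$ is a proper strongly connected subdigraph of each). First I would set up the Perron vector $X$ for $B^5_{n,p,q}$ and, exactly as in \eqref{eq:1}--\eqref{eq:2}, use the eigenvalue equations at the vertices of $V_p\setminus\{v_1\}$ and $V_q$ together with $q(B^5_{n,p,q})>p+q$ to conclude that the coordinates on $V_q$ are all equal (say $x_{p+1}$) and the coordinates on $V_p$ that have out-neighbourhood exactly $V_q$ are all equal. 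Here one must be careful about which vertices of $V_p\cup V_q$ carry the extra path-arcs: in $B^5_{n,p,q}$ the path is $v_1v_{p+q+1}\cdots v_nv_{p+1}$, so $v_1\in V_p$ has an extra out-arc to $v_{p+q+1}$ and $v_{p+1}\in V_q$ has an extra in-arc from $v_n$; this matches the structure in $B^1$ but with the path re-attached, so the reduction to a constant number of unknowns ($x_1, x_{p}, x_{p+1}, x_{p+q+1}$, plus the geometric decay \eqref{eq:6} along the path) goes through verbatim.

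Next I would carry out the same successive multiplications by $(x-q)$, $(x-p)$, $(x-q-1)$ and $(x-1)^{\,n-p-q}$ as in the displayed computation following \eqref{eq:8}, obtaining a polynomial $h(x)$ with $q(B^5_{n,p,q})$ as its largest real root; because the path in $B^5_{n,p,q}$ has $n-p-q$ internal-type factors of $(x-1)$ (the path $\overrightarrow{P_{n-p-q+2}}$ contributes $n-p-q$ outdegree-one vertices), while $B^1_{n-1,p,q}$ has a path on $(n-1)-p-q$ internal vertices, the exponent of $(x-1)$ will differ by one between the two polynomials. I expect $h(x)$ to take the shape
\[
h(x)=(x-1)^{n-p-q}\bigl[x^3-(p+2q+1)x^2+(q^2+pq+p+q)x-q\bigr]-q\cdot(\text{something}),
\]
and the analogous polynomial for $q(B^1_{n-1,p,q})$ to be
\[
\tilde h(x)=(x-1)^{n-p-q-1}\bigl[x^3-(p+2q+1)x^2+(q^2+pq+p+q)x-q\bigr]-q,
\]
so that $h(x)$ and $\tilde h(x)$ share the same cubic bracket and differ only by one power of $(x-1)$ and by the trailing constant/term. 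Then $h(x)-\tilde h(x)$ factors as $(x-1)^{n-p-q-1}\bigl[(x-2)\bigl(x^3-(p+2q+1)x^2+(q^2+pq+p+q)x-q\bigr)\bigr]$ plus lower-order corrections, and I would show this has a definite sign for $x>p+q\ge 2$; since both $Q$-indices exceed $p+q$, the sign of $h-\tilde h$ on $(p+q,\infty)$ forces the desired inequality $q(B^5_{n,p,q})\le q(B^1_{n-1,p,q})$, with the borderline/equality behaviour tracked through whichever term vanishes.

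The main obstacle will be getting the two polynomials into genuinely comparable form: the re-attachment of the path changes which coordinates are forced equal and which eigenvalue equations couple $x_1$ to $x_p$ versus $x_{p+1}$, so the cubic brackets might not literally coincide and one may instead have to bound the cubic factor (using $x>p+q$) rather than cancel it exactly. I would handle this by first establishing, as in Lemma~\ref{le:c6}, the coordinate inequalities $x_1>x_p$ and $x_{p+1}>x_p$ for the relevant Perron vector, which both pin down the sign of the cross terms and let me discard nonnegative contributions when estimating $h(x)-\tilde h(x)$ on $x>p+q$. Once the sign of the difference polynomial is settled on that interval, the conclusion is immediate, and the ``$\le$'' (as opposed to strict) presumably reflects the $p=q$ case where $B^5_{n,p,q}$ and its partner coincide in structure, exactly parallel to Theorem~\ref{th:ch-1}.
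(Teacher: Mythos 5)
Your route (reducing, as in Theorem~\ref{th:ch-1}, each Q-index to the largest real root of a one-variable equation and comparing the two equations) is genuinely different from the paper's proof and can be made to work, but the comparison step as you propose it has a real gap. Carrying out your reduction for $B^5_{n,p,q}$ one finds the same cubic $c(x)=x^3-(p+2q+1)x^2+(q^2+pq+p+q)x-q$ as in Theorem~\ref{th:ch-1}, but with equation $h(x)=(x-1)^{n-p-q}c(x)-(x-q)=0$ (the trailing term is the linear factor $-(x-q)$, coming from $(q(B^5_{n,p,q})-1)^{n-p-q}x_{p+q+1}=x_{p+1}$ rather than $=x_p$), while for $B^1_{n-1,p,q}$ one has $\tilde h(x)=(x-1)^{n-1-p-q}c(x)-q=0$. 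Your plan is to show that $h-\tilde h$ has a fixed sign on all of $(p+q,\infty)$, or, as a fallback, to bound the cubic factor from below using only $x>p+q$; neither works, because $c(p+q)=-q<0$, so $c$ is negative just above $p+q$ and the dominant term $(x-1)^{n-p-q-1}(x-2)c(x)$ of $h-\tilde h$ makes the difference negative there when the path is long (e.g.\ $p=5$, $q=1$, $x$ slightly above $6$), while it is positive for large $x$. The repair is to evaluate at the root instead of comparing globally: if $\rho=q(B^5_{n,p,q})$, then $(\rho-1)^{n-p-q}c(\rho)=\rho-q$, hence $\tilde h(\rho)=\frac{\rho-q}{\rho-1}-q=\frac{(1-q)\rho}{\rho-1}\le 0$, and since $\tilde h(x)\to+\infty$ as $x\to\infty$ and $q(B^1_{n-1,p,q})$ is the largest real root of $\tilde h$, we get $q(B^1_{n-1,p,q})\ge\rho$. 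Note also that the borderline case in this computation is $q=1$, not $p=q$ as you guessed, and you would still owe the routine verification (asserted but not proved in Theorem~\ref{th:ch-1} as well) that the Q-indices are indeed the largest real roots of these equations.

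For contrast, the paper's proof avoids polynomials entirely and is much shorter: with $X$ the Perron vector of $B^5_{n,p,q}$, the eigenvalue equations at $v_p$ and $v_n$ give $(q(B^5_{n,p,q})-1)qx_n=qx_{p+1}=(q(B^5_{n,p,q})-q)x_p$, hence $x_n\le x_p$; replacing the arc $(v_{n-1},v_n)$ by $(v_{n-1},v_p)$ then does not decrease the Q-index by Lemma~\ref{le:c1}, and the resulting digraph has strongly connected components $B^1_{n-1,p,q}$ and the isolated vertex $v_n$, so Lemma~\ref{le:c2} yields $q(B^5_{n,p,q})\le q(B^{5*}_{n,p,q})=q(B^1_{n-1,p,q})$. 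If you want to keep your polynomial approach, fix the comparison step as indicated above; otherwise the arc-switching argument is the cleaner path.
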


\begin{proof} Let $B^{5*}_{n,p,q}=B^5_{n,p,q}-\{(v_{n-1},v_n)\}+\{(v_{n-1},v_{p})\}$
 and $X=(x_1,x_2,\ldots,x_n)^T$ be the
Perron vector corresponding to $q(B^5_{n,p,q})$, where $x_i$ corresponds to the vertex $v_i$.
Since
$$x_2=x_3=\ldots=x_p\triangleq x_p.$$
$$x_{p+1}=x_{p+2}=\ldots=x_{p+q}\triangleq x_{p+1}.$$
Then from $Q(B^5_{n,p,q})X=q(B^5_{n,p,q})X$, we have
$$
(Q(B^5_{n,p,q})X)_p=q(B^5_{n,p,q})x_p=qx_p+qx_{p+1},
$$
$$
(Q(B^5_{n,p,q})X)_n=q(B^5_{n,p,q})x_n=x_n+x_{p+1},
$$
thus $(q(B^5_{n,p,q})-1)qx_n=qx_{p+1}=(q(B^5_{n,p,q})-q)x_p$.
Since $q(B^5_{n,p,q})-1\geq q(B^5_{n,p,q})-q>0$ by $q(B^5_{n,p,q})>p+q$,
then $x_n\leq x_p$
By Lemma \ref{le:c1}, we have $q(B^{5*}_{n,p,q})\geq q(B^5_{n,p,q})$. Since
$B^1_{n-1,p,q}$ and $K_1=\{v_n\}$ are the two strongly connected components of $B^{5*}_{n,p,q}$,
then by Lemma \ref{le:c2}, we have $q(B^{5*}_{n,p,q})=\max\{q(B^1_{n-1,p,q}), q(K_1)\}=q(B^1_{n-1,p,q})$.
Thus $q(B^5_{n,p,q})\leq q(B^1_{n-1,p,q})$.
\end{proof}

\noindent\begin{theorem}\label{th:ch-7} Let $p\geq q\geq1$, $p+q\leq n-1$, $n\equiv p+q \ (mod \ 2)$ and
$G\in\mathcal{G}_{n,p,q}$ be a bipartite digraph, then $q(G)\geq q(B^5_{n,p,q})$ and the equality holds
if and only if $G\cong B^5_{n,p,q}$.
\end{theorem}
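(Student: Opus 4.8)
The plan is to argue by induction on $n$ (for fixed $p\ge q\ge1$), using the structural reductions established in Sections 2--3. The base case is $n=p+q+2$, where one checks directly (using Lemmas \ref{le:c2} and \ref{le:c3}) that among the finitely many digraphs in $\mathcal G_{n,p,q}$ with the correct parity, $B^5_{n,p,q}$ is the unique minimizer. For the inductive step, take $G\in\mathcal G_{n,p,q}$ with $n\equiv p+q\pmod 2$ and $n\ge p+q+4$, and let $X$ be its Perron vector. The first move is to reduce $G$ to a ``lollipop-type'' digraph: I would show that if $G$ is not already obtained from $\overleftrightarrow{K_{p,q}}$ by attaching some directed ear structure, then $G$ properly contains such a digraph, so by Lemma \ref{le:c3} we may assume $G$ consists of $\overleftrightarrow{K_{p,q}}$ together with exactly the arcs needed for strong connectivity, i.e.\ directed paths joining the two parts. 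Because every vertex outside $\overleftrightarrow{K_{p,q}}$ must lie on such a path and (by Lemma \ref{le:c3} again, minimality) each internal vertex of such a path has outdegree $1$, Lemma \ref{le:c4} applies to these paths.

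The core of the argument is then a ``pull back toward one component'' step, mirroring Theorems \ref{th:ch-2}--\ref{th:ch-3} and \ref{th:ch-5}. Using Lemma \ref{le:c1} and the monotonicity of the Perron entries along directed ears (Lemma \ref{le:c4}) together with the component comparisons from Lemma \ref{le:c6}, I would show that redirecting the terminal arc of any attached ear so that it points into the smaller part $V_q$ (or, in the critical parity-preserving case, appropriately chosen so that the resulting digraph stays strongly connected and keeps $n\equiv p+q$) does not increase $q$, and strictly decreases it unless the ear already has the $B^5$-configuration. Concretely, among strongly connected bipartite digraphs built by attaching a single directed path of length $n-p-q+2$ to $\overleftrightarrow{K_{p,q}}$, the endpoints of that path lie one in each part (to keep bipartiteness and the path length even, forced by $n\equiv p+q$); Theorem \ref{th:ch-1} and Theorem \ref{th:ch-4} handle where in a part the endpoints sit, and Theorems \ref{th:ch-5} and \ref{th:ch-6} pin down $B^5_{n,p,q}$ versus $B^1$. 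If instead $G$ uses two or more disjoint ears, I would split off all but one ear using the $G^w$-type operation of Lemma \ref{le:c5} (reading it backwards: contracting an ear of $G$ yields a strongly connected digraph of smaller order with no larger $q$ is not quite what Lemma \ref{le:c5} says, so instead I would delete one ear's arcs to break strong connectivity, apply Lemma \ref{le:c2} to pass to the large strongly connected component, and then invoke the induction hypothesis on that smaller digraph in $\mathcal G_{n',p,q}$), obtaining $q(G)\ge q(B^5_{n',p,q})$ for some $n'<n$ with $n'\equiv p+q$, and finally chain the inequalities $q(B^5_{n',p,q})\ge q(B^5_{n,p,q})$ supplied by Lemma \ref{le:c5} applied to $B^5$ (since $B^5$ is itself obtained from a smaller $B^5$ by an ear-subdivision of exactly the type in Lemma \ref{le:c5}).

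The equality analysis is then bookkeeping: every inequality used is strict unless the corresponding local configuration already matches $B^5_{n,p,q}$, so tracing the chain backwards forces $G\cong B^5_{n,p,q}$. I expect the main obstacle to be the multi-ear case and, within it, ensuring the parity condition $n\equiv p+q\pmod2$ is preserved at every reduction: deleting or contracting an ear changes $n$ by the ear's internal length, and one must be careful that the induction is always applied to an order $n'$ of the right parity, and that the ``bad'' parity digraphs $B^1$ (which are the minimizers in the other parity class, by the companion theorem) never sneak in as intermediate comparison graphs. Making the ear-reduction operations parity-respecting — by removing internal vertices two at a time, or by comparing with $B^5_{n-2,p,q}$ rather than $B^5_{n-1,p,q}$ — is the delicate point, and Lemma \ref{le:c5} (which decreases $n$ by exactly $1$) will have to be applied twice in succession, or combined with Theorem \ref{th:ch-6}, to stay within the correct parity class.
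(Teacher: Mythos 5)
Your proposal is a plan rather than a proof, and the places where it stops at ``I would show'' are exactly where the real content lies. Three concrete problems. First, your base case $n=p+q+2$ is not a finite verification: $p$ and $q$ are arbitrary parameters, so ``one checks directly among the finitely many digraphs'' is an infinite family of claims and already requires the full comparison machinery (Theorems \ref{th:ch-1}--\ref{th:ch-6}); nothing is gained by setting up an induction whose base case is as hard as the general statement. Second, your central monotonicity claim is in the wrong direction: redirecting the terminal arc of the ear of $B^1_{n,p,q}$ from $v_p$ into the part $V_q$ produces $B^5_{n,p,q}$, and by Lemma \ref{le:c6}(ii) together with Lemma \ref{le:c1} this \emph{strictly increases} the Q-index (this is precisely how Theorem \ref{th:ch-5} is proved), so ``pointing the ear into the smaller part does not increase $q$'' is false. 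The reason $B^5_{n,p,q}$ nevertheless minimizes in its parity class is not a local pull-toward-$V_q$ monotonicity but a trade-off: same-part or closed ears ($B^1$--$B^4$ types) have odd internal length, hence when $n\equiv p+q\pmod 2$ they can only be grown (by Lemma \ref{le:c5}) to order $n-1$, and the parity gap is bridged by the inequality $q(B^5_{n,p,q})\le q(B^1_{n-1,p,q})$ of Theorem \ref{th:ch-6}. Third, the multi-ear/parity issue that you yourself flag as ``the delicate point'' is left unresolved, and your proposed repair (delete an ear, pass to the big strong component, apply induction) does not obviously preserve parity, since an ear inside one part or a closed ear has an odd number of internal vertices, so $n'$ need not satisfy $n'\equiv p+q\pmod 2$; also your parity argument that ``the endpoints of the path lie one in each part'' only applies when a single ear contains all external vertices, which need not happen.

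The actual proof avoids induction and avoids any global structure theory for $G$: since $G$ is strongly connected and contains $\overleftrightarrow{K_{p,q}}$ properly, one extracts a subdigraph $H$ consisting of $\overleftrightarrow{K_{p,q}}$ plus a single minimal ear, so $H$ is one of $B^1,\dots,B^4$ of order $p+q+k$ with $k$ odd, or $B^5,B^6$ of order $p+q+l$ with $l$ even; then $q(H)\le q(G)$ with equality iff $H\cong G$ (Lemma \ref{le:c3}), Lemma \ref{le:c5} is applied repeatedly to subdivide the ear up to order $n$ (in the $B^5,B^6$ cases) or $n-1$ (in the $B^1$--$B^4$ cases, where parity forbids reaching $n$), and the chain is closed with Theorems \ref{th:ch-1}--\ref{th:ch-4} plus the two parity-bridging inequalities of Theorems \ref{th:ch-5} and \ref{th:ch-6}; strictness in cases $B^1$--$B^4$ (where $H$ has order at most $n-1<n$, so $H\not\cong G$) and in the $B^6$ case (Theorem \ref{th:ch-4}, strict unless $p=q$, in which case $B^6\cong B^5$) yields the uniqueness. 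If you want to salvage your write-up, drop the induction, prove the single-ear extraction and the six-case list, and replace the false redirection monotonicity by the correct use of Theorems \ref{th:ch-5} and \ref{th:ch-6}.
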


\begin{proof} Clearly, $\overleftrightarrow{K_{p,q}}$ is a proper subdigraph of $G$ since $G\in\mathcal{G}_{n,p,q}$.
Since $G$ is strongly connected, it is possible to obtain a digraph $H$ from $G$ by deleting vertices and arcs in
a way such that one has a subdigraph $\overleftrightarrow{K_{p,q}}$. Therefore

(1) $H\cong B^1_{p+q+k,p,q}$, \ \ \ \ ($k\equiv 1$ \ (mod \ 2), $k\geq1$) or

(2) $H\cong B^2_{p+q+k,p,q}$, \ \ \ \ ($k\equiv 1$ \ (mod \ 2), $k\geq1$) or

(3) $H\cong B^3_{p+q+k,p,q}$, \ \ \ \ ($k\equiv 1$ \ (mod \ 2), $k\geq1$) or

(4) $H\cong B^4_{p+q+k,p,q}$, \ \ \ \ ($k\equiv 1$ \ (mod \ 2), $k\geq1$) or

(5) $H\cong B^5_{p+q+l,p,q}$, \ \ \ \ ($l\equiv 0$ \ (mod \ 2), $l\geq2$) or

(6) $H\cong B^6_{p+q+l,p,q}$, \ \ \ \ ($l\equiv 0$ \ (mod \ 2), $l\geq2$).

By Lemma \ref{le:c3}, $q(H)\leq q(G)$, the equality holds if and only if $H\cong G$.

Case (i). $H\cong B^1_{p+q+k,p,q}$, \ \ \ \ ($k\equiv 1$ \ (mod \ 2), $k\geq1$)

Insert $n-p-q-k-1$ vertices into the directed $\overrightarrow{P_{k+2}}$ such that
the resulting bipartite digraphs is $B^1_{n-1,p,q}$, then
$q(B^1_{n-1,p,q})\leq q(H)$ by using Lemma \ref{le:c5} repeatedly $n-p-q-k-1$ times,
and thus $q(B^5_{n,p,q})\leq q(B^1_{n-1,p,q})\leq q(H)<q(G)$ by Theorem \ref{th:ch-6}.

Case (ii). $H\cong B^2_{p+q+k,p,q}$, \ \ \ \ ($k\equiv 1$ \ (mod \ 2), $k\geq1$)

Insert $n-p-q-k-1$ vertices into the directed $\overrightarrow{P_{k+2}}$ such that
the resulting bipartite digraphs is $B^2_{n-1,p,q}$, then
$q(B^2_{n-1,p,q})\leq q(H)$ by using Lemma \ref{le:c5} repeatedly $n-p-q-k-1$ times,
and thus $ q(B^5_{n,p,q})\leq q(B^1_{n-1,p,q})\leq q(B^2_{n-1,p,q})\leq q(H)<q(G)$
by Theorems \ref{th:ch-1} and \ref{th:ch-6}.

Case (iii). $H\cong B^3_{p+q+k,p,q}$, \ \ \ \ ($k\equiv 1$ \ (mod \ 2), $k\geq1$)

Insert $n-p-q-k-1$ vertices into the directed $\overrightarrow{C_{k+1}}$ such that
the resulting bipartite digraphs is $B^3_{n-1,p,q}$, then
$q(B^3_{n-1,p,q})\leq q(H)$ by using Lemma \ref{le:c5} repeatedly $n-p-q-k-1$ times,
and thus  $q(B^5_{n,p,q})\leq q(B^1_{n-1,p,q})< q(B^3_{n-1,p,q})\leq q(H)<q(G)$
by Theorems \ref{th:ch-2} and \ref{th:ch-6}.

Case (iv). $H\cong B^4_{p+q+k,p,q}$, \ \ \ \ ($k\equiv 1$ \ (mod \ 2), $k\geq1$)

Insert $n-p-q-k-1$ vertices into the directed $\overrightarrow{C_{k+1}}$ such that
the resulting bipartite digraphs is $B^4_{n-1,p,q}$, then
$q(B^4_{n-1,p,q})\leq q(H)$ by using Lemma \ref{le:c5} repeatedly $n-p-q-k-1$ times,
and thus $q(B^5_{n,p,q})\leq q(B^1_{n-1,p,q})\leq q(B^2_{n-1,p,q})\leq q(B^4_{n-1,p,q})\leq q(H)<q(G)$
by Theorems \ref{th:ch-1}, \ref{th:ch-3} and \ref{th:ch-6}.

Case (v). $H\cong B^5_{p+q+l,p,q}$, \ \ \ \ ($l\equiv 0$ \ (mod \ 2), $l\geq2$)

Insert $n-p-q-l$ vertices into the directed $\overrightarrow{P_{l+2}}$ such that
the resulting bipartite digraphs is $B^5_{n,p,q}$, then
$q(B^5_{n,p,q})\leq q(H)$ by using Lemma \ref{le:c5} repeatedly $n-p-q-l$ times,
and thus $q(B^5_{n,p,q})\leq q(H)\leq q(G)$.

Case (vi). $H\cong B^6_{p+q+l,p,q}$, \ \ \ \ ($l\equiv 0$ \ (mod \ 2), $l\geq2$)

Insert $n-p-q-l$ vertices into the directed $\overrightarrow{P_{l+2}}$ such that
the resulting bipartite digraphs is $B^6_{n,p,q}$, then
$q(B^6_{n,p,q})\leq q(H)$ by using Lemma \ref{le:c5} repeatedly $n-p-q-l$ times,
and thus $q(B^5_{n,p,q})\leq q(B^6_{n,p,q})\leq q(H)\leq q(G)$ by Theorem \ref{th:ch-4}.

Combining the above six cases, we have $q(G)\geq q(B^5_{n,p,q})$ and the equality holds
if and only if $G\cong B^5_{n,p,q}$, where $n\equiv p+q \ (mod \ 2)$.
\end{proof}

\noindent\begin{theorem}\label{th:ch-8} Let $p\geq q\geq1$, $p+q\leq n-1$, $n\not\equiv p+q \ (mod \ 2)$ and
$G\in\mathcal{G}_{n,p,q}$ be a bipartite digraph, then $q(G)\geq q(B^1_{n,p,q})$ and the equality holds
if and only if $G\cong B^1_{n,p,q}$.
\end{theorem}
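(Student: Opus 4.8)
The plan is to run the argument of Theorem \ref{th:ch-7} almost verbatim, but now with $B^1_{n,p,q}$ playing the role that $B^5_{n,p,q}$ played there. The point is that $n\not\equiv p+q\ (mod\ 2)$ says $n-p-q$ is \emph{odd}, which is exactly the parity condition under which $B^1_{n,p,q}$ (and $B^2_{n,p,q}$, $B^3_{n,p,q}$, $B^4_{n,p,q}$) is a bipartite digraph lying in $\mathcal{G}_{n,p,q}$. First I would use the strong connectivity of $G$ and the fact that $\overleftrightarrow{K_{p,q}}$ is a proper subdigraph of $G$ to delete vertices and arcs until one reaches a minimal strongly connected bipartite subdigraph $H$ still containing $\overleftrightarrow{K_{p,q}}$. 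As in the setup before Theorem \ref{th:ch-7}, such an $H$ is isomorphic to one of $B^1_{p+q+k,p,q},B^2_{p+q+k,p,q},B^3_{p+q+k,p,q},B^4_{p+q+k,p,q}$ with $k$ odd, $k\ge1$, or to $B^5_{p+q+l,p,q},B^6_{p+q+l,p,q}$ with $l$ even, $l\ge2$; in every case $q(H)\le q(G)$ by Lemma \ref{le:c3}, with equality only if $H\cong G$.

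Next I would dispose of the six cases by inserting vertices and invoking Lemma \ref{le:c5} together with the comparison theorems already proved. Since $n-p-q$ and $k$ are both odd, $n-p-q-k$ is a nonnegative even number, so in cases (i)--(iv) I insert $n-p-q-k$ vertices into the directed path $\overrightarrow{P_{k+2}}$ (cases (i),(ii)) or the directed cycle $\overrightarrow{C_{k+1}}$ (cases (iii),(iv)), which after repeated use of Lemma \ref{le:c5} yields the bipartite digraph $B^i_{n,p,q}$ with $q(B^i_{n,p,q})\le q(H)$. Then the chains are: case (i) $q(B^1_{n,p,q})\le q(H)\le q(G)$; case (ii) $q(B^1_{n,p,q})\le q(B^2_{n,p,q})\le q(H)\le q(G)$ by Theorem \ref{th:ch-1}; case (iii) $q(B^1_{n,p,q})<q(B^3_{n,p,q})\le q(H)\le q(G)$ by Theorem \ref{th:ch-2}; case (iv) $q(B^1_{n,p,q})\le q(B^2_{n,p,q})<q(B^4_{n,p,q})\le q(H)\le q(G)$ by Theorems \ref{th:ch-1} and \ref{th:ch-3}. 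In cases (v),(vi) one has $l$ even and $l\le n-p-q-1$, so $n-1-p-q-l$ is a nonnegative even number; inserting that many vertices into $\overrightarrow{P_{l+2}}$ yields $B^5_{n-1,p,q}$, resp.\ $B^6_{n-1,p,q}$, and then $q(B^1_{n,p,q})<q(B^5_{n-1,p,q})\le q(H)\le q(G)$ by Theorem \ref{th:ch-5}, resp.\ $q(B^1_{n,p,q})<q(B^5_{n-1,p,q})\le q(B^6_{n-1,p,q})\le q(H)\le q(G)$ by Theorems \ref{th:ch-4} and \ref{th:ch-5}.

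Putting the six cases together gives $q(G)\ge q(B^1_{n,p,q})$. For the equality claim, cases (iii)--(vi) already force the strict inequality $q(G)>q(B^1_{n,p,q})$ (strictness coming from Theorems \ref{th:ch-2}, \ref{th:ch-3}, \ref{th:ch-5}), so equality is possible only in cases (i),(ii); there $q(H)=q(G)$ forces $H\cong G$ by Lemma \ref{le:c3} (as $G$ is strongly connected), hence $H$ has $n$ vertices, so $G$ is isomorphic to $B^1_{n,p,q}$ or $B^2_{n,p,q}$, and in the latter case $q(B^1_{n,p,q})=q(B^2_{n,p,q})$ forces $p=q$ by Theorem \ref{th:ch-1}, whence $B^2_{n,p,q}\cong B^1_{n,p,q}$; the reverse implication is immediate. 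I expect the only real source of friction to be the parity bookkeeping: in each of the six cases one must check that the number of inserted vertices is even (so that the terminal digraph is bipartite) and that the target reached is $B^i_{n,p,q}$ in cases (i)--(iv) but $B^i_{n-1,p,q}$ in cases (v),(vi), and then pick out precisely the right combination of Theorems \ref{th:ch-1}--\ref{th:ch-5} to funnel every case down to $q(B^1_{n,p,q})$.
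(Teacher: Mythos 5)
Your proposal is correct and follows essentially the same route as the paper's own proof: the same six-case reduction to a minimal subdigraph $H$ containing $\overleftrightarrow{K_{p,q}}$, the same vertex insertions via Lemma \ref{le:c5} (to $B^i_{n,p,q}$ in cases (i)--(iv) and to $B^i_{n-1,p,q}$ in cases (v),(vi)), and the same chains of inequalities from Theorems \ref{th:ch-1}--\ref{th:ch-5}. Your explicit treatment of the equality case (ii), where $q(B^1_{n,p,q})=q(B^2_{n,p,q})$ forces $p=q$ and hence $B^2_{n,p,q}\cong B^1_{n,p,q}$, is in fact slightly more careful than the paper's wording.
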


\begin{proof} Clearly, $\overleftrightarrow{K_{p,q}}$ is a proper subdigraph of $G$ since $G\in\mathcal{G}_{n,p,q}$.
Since $G$ is strongly connected, it is possible to obtain a digraph $H$ from $G$ by deleting vertices and arcs in
a way such that one has a subdigraph $\overleftrightarrow{K_{p,q}}$. Therefore

(1) $H\cong B^1_{p+q+k,p,q}$, \ \ \ \ ($k\equiv 1$ \ (mod \ 2), $k\geq1$) or

(2) $H\cong B^2_{p+q+k,p,q}$, \ \ \ \ ($k\equiv 1$ \ (mod \ 2), $k\geq1$) or

(3) $H\cong B^3_{p+q+k,p,q}$, \ \ \ \ ($k\equiv 1$ \ (mod \ 2), $k\geq1$) or

(4) $H\cong B^4_{p+q+k,p,q}$, \ \ \ \ ($k\equiv 1$ \ (mod \ 2), $k\geq1$) or

(5) $H\cong B^5_{p+q+l,p,q}$, \ \ \ \ ($l\equiv 0$ \ (mod \ 2), $l\geq2$) or

(6) $H\cong B^6_{p+q+l,p,q}$, \ \ \ \ ($l\equiv 0$ \ (mod \ 2), $l\geq2$).

By Lemma \ref{le:c3}, $q(H)\leq q(G)$, the equality holds if and only if $H\cong G$.

Case (i). $H\cong B^1_{p+q+k,p,q}$, \ \ \ \ ($k\equiv 1$ \ (mod \ 2), $k\geq1$)

Insert $n-p-q-k$ vertices into the directed $\overrightarrow{P_{k+2}}$ such that
the resulting bipartite digraphs is $B^1_{n,p,q}$, then
$q(B^1_{n,p,q})\leq q(H)$ by using Lemma \ref{le:c5} repeatedly $n-p-q-k$ times,
and thus $q(B^1_{n,p,q})\leq q(H)\leq q(G)$.

Case (ii). $H\cong B^2_{p+q+k,p,q}$, \ \ \ \ ($k\equiv 1$ \ (mod \ 2), $k\geq1$)

Insert $n-p-q-k$ vertices into the directed $\overrightarrow{P_{k+2}}$ such that
the resulting bipartite digraphs is $B^2_{n,p,q}$, then
$q(B^2_{n,p,q})\leq q(H)$ by using Lemma \ref{le:c5} repeatedly $n-p-q-k$ times,
and thus $q(B^1_{n,p,q})\leq q(B^2_{n,p,q})\leq q(H)\leq q(G)$
by Theorem \ref{th:ch-1}.

Case (iii). $H\cong B^3_{p+q+k,p,q}$, \ \ \ \ ($k\equiv 1$ \ (mod \ 2), $k\geq1$)

Insert $n-p-q-k$ vertices into the directed $\overrightarrow{C_{k+1}}$ such that
the resulting bipartite digraphs is $B^3_{n,p,q}$, then
$q(B^3_{n,p,q})\leq q(H)$ by using Lemma \ref{le:c5} repeatedly $n-p-q-k$ times,
and thus  $q(B^1_{n,p,q})< q(B^3_{n,p,q})\leq q(H)\leq q(G)$
by Theorem \ref{th:ch-2}.

Case (iv). $H\cong B^4_{p+q+k,p,q}$, \ \ \ \ ($k\equiv 1$ \ (mod \ 2), $k\geq1$)

Insert $n-p-q-k$ vertices into the directed $\overrightarrow{C_{k+1}}$ such that
the resulting bipartite digraphs is $B^4_{n-1,p,q}$, then
$q(B^4_{n,p,q})\leq q(H)$ by using Lemma \ref{le:c5} repeatedly $n-p-q-k$ times,
and thus $q(B^1_{n,p,q})\leq q(B^2_{n,p,q})<q(B^4_{n,p,q})\leq q(H)\leq q(G)$
by Theorems \ref{th:ch-1} and \ref{th:ch-3}.

Case (v). $H\cong B^5_{p+q+l,p,q}$, \ \ \ \ ($l\equiv 0$ \ (mod \ 2), $l\geq2$)

Insert $n-p-q-l-1$ vertices into the directed $\overrightarrow{P_{l+2}}$ such that
the resulting bipartite digraphs is $B^5_{n-1,p,q}$, then
$q(B^5_{n-1,p,q})\leq q(H)$ by using Lemma \ref{le:c5} repeatedly $n-p-q-l-1$ times,
and thus $q(B^1_{n,p,q})<q(B^5_{n-1,p,q})\leq q(H)< q(G)$
by Theorem \ref{th:ch-5}.

Case (vi). $H\cong B^6_{p+q+l,p,q}$, \ \ \ \ ($l\equiv 0$ \ (mod \ 2), $l\geq2$)

Insert $n-p-q-l-1$ vertices into the directed $\overrightarrow{P_{l+2}}$ such that
the resulting bipartite digraphs is $B^6_{n-1,p,q}$, then
$q(B^6_{n-1,p,q})\leq q(H)$ by using Lemma \ref{le:c5} repeatedly $n-p-q-l-1$ times,
and thus $q(B^1_{n,p,q})<q(B^5_{n-1,p,q})\leq q(B^6_{n-1,p,q})\leq q(H)< q(G)$
by Theorems \ref{th:ch-4} and \ref{th:ch-5}.

Combining the above six cases, we have $q(G)\geq q(B^1_{n,p,q})$ and the equality holds
if and only if $G\cong B^1_{n,p,q}$, where $n\not\equiv p+q \ (mod \ 2)$.
\end{proof}

%\section*{Acknowledgements}

%The authors are very grateful to the referees for many detailed
%comments and suggestions, which are very helpful for improving the
%presentation of the manuscript. Lemma 2.8 and Theorem 3.9 are due
%to an anonymous referee, to whom we would like to express our
%special thanks.

\end{document}